\tikzset{
   commutative diagrams/.cd,
   arrow style=tikz,
   diagrams={>=latex}}
\newtheorem{theorem}{Theorem}[section]
\newtheorem{lemma}[theorem]{Lemma}
\newtheorem{corollary}[theorem]{Corollary}
\theoremstyle{definition}
\newtheorem{remark}[theorem]{Remark}
\title{Coverings of torus knots in $S^2\times S^1$ and universals}
\date{}
\author{V\'{\i}ctor N\'unez}
\email{victor@cimat.mx}
\address{CIMAT, A.P. 402, Guanajuato 36000, Gto., M\'EXICO}
\author[Enrique Ram\'{\i}rez]{Enrique Ram\'{\i}rez-Losada}
\email{kikis@cimat.mx}
\address{CIMAT, A.P. 402, Guanajuato 36000, Gto., M\'EXICO}
\author[Jes\'us Rodr\'{\i}guez]{Jes\'us Rodr\'{\i}guez-Viorato}
\email{jesusr@cimat.mx}
\address{CIMAT, A.P. 402, Guanajuato 36000, Gto., M\'EXICO}
\subjclass{Primary 57M12}
\keywords{Seifert manifold, branched covering, torus knot.}
\begin{document}
\maketitle

\begin{abstract}
Let $t_{\alpha,\beta}\subset S^2\times S^1$ be an ordinary fiber of a
Seifert fibering of $S^2\times S^1$ with two exceptional fibers of order
$\alpha$.
We show that any Seifert manifold with Euler number zero is a branched
covering of $S^2\times S^1$ with branching $t_{\alpha,\beta}$ if $\alpha\geq3$.

We compute the Seifert invariants of the Abelian covers of $S^2\times
S^1$ branched along a $t_{\alpha,\beta}$.

We also show that $t_{2,1}$, a non-trivial torus knot in $S^2\times S^1$, is
not universal.
\end{abstract}

\section{Introduction.}

The study of branched coverings of Seifert manifolds branched along fibers is an interesting project (\cite{seifert}). These kind of coverings are also Seifert manifolds. In particular the problem of determining which Seifert manifolds are branched coverings with branching along a torus knot is interesting, where a \emph{torus knot in $M$} is an ordinary fiber of a Seifert fibering of~$M$. 

Recall that in a branched covering of Seifert manifolds, the Euler number of the covering is a non-zero rational multiple of the Euler number of the target. Therefore in a branched covering $M\rightarrow N$ between Seifert manifolds, if the Euler number $e(N)\neq0$, then $e(M)\neq0$.
In~\cite{trefoil}, a converse of this implication is proven for torus knots in the 3-sphere: If $M$ is an orientable Seifert manifold with orientable basis and Euler number $e(M)\neq0$, and $\tau_{p,q}\subset S^3$ is a torus knot with $2\leq q<p$, then there is a branched covering $\varphi:M\rightarrow(S^3,\tau_{p,q})$. 

What about if $e(M)=0$? In this paper we answer this question: If $M$ is an orientable Seifert manifold with orientable basis and Euler number $e(M)=0$, and $t_{\alpha,\beta}\subset S^2\times S^1$ is a torus knot  (see Section~\ref{sec223}), with $3\leq\beta\leq\alpha/2$, then there is a branched covering $\varphi:M\rightarrow(S^2\times S^1,t_{\alpha,\beta})$.

Resuming, all orientable Seifert manifolds with orientable basis can be constructed as branched coverings branched along torus knots in $S^3$ or in $S^2\times S^1$.

In contrast with the result in \cite{trefoil}, that all non-trivial torus knots in~$S^3$ are universal in the sense above, we show that $t_{2,1}\subset S^2\times S^1$ is a non-trivial torus knot which is not universal (Theorem~\ref{thm47}).

This paper is organized as follows.

In Section~\ref{sec2} we give some definitions and list some useful results.

In Section~\ref{sec3} we give a list of the Abelian coverings of $(S^2\times S^1,t_{\alpha,\beta})$. It follows that if $\varphi:M\rightarrow(S^2\times S^1,t_{\alpha,\beta})$ is an Abelian branched covering and $H_1(M)\cong\mathbb{Z}$, then $M\cong S^2\times S^1$, and $\varphi$ is an unbranched cyclic covering space. Compare with the analogous result: If $M$ is an $n$-fold cyclic branched covering of $(S^3,\tau_{p,q})$, then $H_1(M)=0$ if and only if~$(n,pq)=1$.

In Section~\ref{sec4} we prove the main theorem of the paper (Corollary~\ref{coro46}).

\section{Preliminaries.}
\label{sec2}
Let $S_n$ be the symmetric group in $n$ symbols.
If $G\leq S_n$, and
$i\in \{1,\dots,n\}$, write $St_G(i)=\{\sigma\in G:\sigma(i)=i\}$; also write
$St(i)=St_{G}(i)$ when $G=S_n$. We write $(1)\in S_n$ for the identity
permutation. 

A {\sl branched covering} between two 
$n$-manifolds $M$ and $N$ is an open, proper map
~$\varphi:M\rightarrow N$ which is finite-to-one. To check that an open map 
$\varphi:M\rightarrow N$ is a branched covering, one finds a subcomplex 
$K\subset M$ of codimension two such that the restriction
~$\varphi|:M-\varphi^{-1}(K)\rightarrow N-K$ is a finite covering
space.  
The subcomplex $K$, usually a submanifold, is called the {\sl branching} 
of $\varphi$.
The covering space 
$\varphi|:M-\varphi^{-1}(K)\rightarrow N-K$ is called the {\sl
  associated covering space} of $\varphi$. The associated covering
determines the branched  
covering (see~\cite{Fox}).

A covering space of $n$ sheets $\psi:X\rightarrow Y$ defines a 
representation~$\omega_\psi:\pi_1(Y)\rightarrow S_n$ as follows: Number the elements
of the preimage 
of a base point $\psi^{-1}(*)=\{1,2,\ldots,n\}$, and for a class 
$[\alpha]\in\pi_1(Y)$, consider the liftings of $\alpha$,
$\alpha_1,\alpha_2,\ldots,\alpha_n:I\rightarrow X$ where $\alpha_i$ 
starts in the point~$i\in\psi^{-1}(*)$; then define 
$\omega_\psi([\alpha])(i)=\alpha_i(1)$, for $i=1,2,\ldots,n$.

A homomorphism $\omega:\pi_1(Y)\rightarrow S_n$ determines a 
covering space 
of $n$ sheets $\psi_\omega:X\rightarrow Y$, namely, the covering space 
corresponding to the subgroup $\omega^{-1}(St(1))\leq\pi_1(Y)$; this
covering space can be completed into a branched covering
$\psi_\omega:\bar X\rightarrow \bar Y$ (\cite{Fox}). A
representation of $\psi$, as~$\omega_\psi$ above, is conjugate to the 
homomorphism $\omega$ (it only depends on the numbering of 
$\psi^{-1}(*)$).

We will describe a branched covering of a manifold 
$M$ by giving a codimension two submanifold $K\subset M$ together with a 
representation~$\omega:\pi_1(M-K)\rightarrow S_n$. For short, we write $N\rightarrow(M,K)$ for a branched covering of $M$ branched along $K$.

\subsection{Seifert manifolds}
Let 
$\alpha_1,\beta_1,\alpha_2,\beta_2,\ldots,\alpha_t,\beta_t$ be 
integers such that $\alpha_i>0$ and $(\alpha_i,\beta_i)=1$ for 
$i=1,2,\ldots,t$. The {\sl Seifert manifold}~$M$ associated to the 
{\sl Seifert symbol }
$\displaystyle(Oo,g;{\beta_1/\alpha_1},{\beta_2/\alpha_2},\ldots,{\beta_t/\alpha_t})$ is assembled as follows:

Let $F$ be an oriented  closed surface of genus $g$, and let 
$D_1,D_2,\ldots,D_t\subset F$ be $t$ disjoint 2-disks. Write 
$F_0=\overline{F-\cup D_i}$, and $M_0=F_0\times S^1$. If $\partial 
F=q_1\sqcup q_2\sqcup\cdots\sqcup q_t$ and $h=\{x\}\times S^1$ for 
some $x\in F_0$, we let $m_i\subset q_i\times S^1$ be a simple closed curve 
such that $m_i\sim
q_i^{\alpha_i}h^{\beta_i}$. Let~$V_1,V_2,\ldots,V_t$ be solid tori
with meridians  
$\mu_1,\mu_2,\ldots,\mu_t$, respectively, and let $\eta_i:\partial 
V_i\rightarrow q_i\times S^1$ be a homeomorphism such that 
$\eta_i(\mu_i)=m_i$ for $i=1,2,\ldots,t$. Then the Seifert manifold 
$M$ associated to the
symbol~$\displaystyle(Oo,g;{\beta_1/\alpha_1},\ldots,{\beta_t/\alpha_t})$
is  
$$M=M_0\bigcup_{\cup\eta_i}\left(\bigcup_{i=1}^t V_i\right).$$

The circles $\{x\}\times S^1$ for $x\in F_0$ are called the {\sl ordinary 
fibers} of $M$ and the core $e_i$ of $V_i$ is called an {\sl
exceptional fiber of order $\alpha_i$} if $\alpha_i>1$; otherwise
$e_i$ is also an ordinary fiber. In any case $e_i$ is called \emph{the fiber
of the ratio $\beta_i/\alpha_i$} ($i=1,2,\dots,t$). The surface $F$ is
called the {\sl orbit surface} or the {\sl base} of $M$. Note that
collapsing each fiber of $M$ into a point gives an
identification~$p:M\rightarrow F$. 

The manifold $M$ associated to
$\displaystyle(Oo,g;
{\beta_1/\alpha_1},\dots,{\beta_t/\alpha_t})$ can be recovered
unambiguously from $M_0=F_0\times S^1$ and the
curves~$m_i=q_i^{\alpha_i}h^{\beta_i}$, $i=1,2,\dots,t$. We call the
pair 
$(F_0\times S^1,\{m_i\}_{i=1}^t)$ a {\sl frame} for~$\displaystyle(Oo,g;
{\beta_1/\alpha_1},\dots,{\beta_t/\alpha_t})$. 

Let $V$ be a solid torus. Any covering space of $\partial V$ extends
into a branched covering of $V$ branched at most along the core of $V$.
Therefore to describe a branched covering of 
 $\displaystyle(Oo,g; {\beta_1/\alpha_1},\dots,{\beta_t/\alpha_t})$
 branched along fibers, it suffices to construct a covering space of a frame for
$\displaystyle(Oo,g; {\beta_1/\alpha_1},\dots,{\beta_t/\alpha_t},
{0/1},\dots,{0/1})$.

The classification of Seifert
manifolds is given by

\begin{theorem}[\cite{NyR}]
\label{tma21}
Two Seifert symbols represent homeomorphic Seifert manifolds by a
fiber preserving homeomorphism if and only if one of the symbols can
be changed into the other by a finite sequence of the following moves: 

0. Permute the ratios.

1. Add or delete $\displaystyle{0\over1}$.

2. Replace the pair $\displaystyle{\beta_i\over\alpha_i},
{\beta_j\over\alpha_j}$ by
$\displaystyle{\beta_i+k\alpha_i\over\alpha_i},
{\beta_j-k\alpha_j\over\alpha_j}$.
\end{theorem}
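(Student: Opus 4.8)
The statement splits into a routine ``if'' direction and a substantive ``only if'' direction, and I would handle them separately. For ``if'' it suffices to realize each move by a fiber-preserving homeomorphism. Move~0 is just relabelling the solid tori $V_1,\dots,V_t$, and move~1 is immediate because a ratio $0/1$ gives $m_i\sim q_i=\partial D_i$, so that piece of $M$ is simply $(F_0\cup D_i)\times S^1$ refibered trivially, and inserting or deleting such a ratio does not change $M$. Move~2 is the only one needing a genuine construction: I would choose an embedded arc $\gamma\subset F_0$ joining $q_i$ to $q_j$, take a pair-of-pants regular neighborhood $P$ of $q_i\cup\gamma\cup q_j$ with $\partial P=q_i\sqcup q_j\sqcup c$, and build a fiber-preserving self-homeomorphism $\Phi$ of $F_0\times S^1$ supported on $P\times S^1$ that restricts to the shear $q_i\mapsto q_ih^{k}$, $h\mapsto h$ near $q_i$, to its inverse near $q_j$, and to the identity on $c\times S^1$; such a $\Phi$ exists precisely because the three prescribed boundary winding numbers $k,-k,0$ on the cuffs of $P$ sum to $0$. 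Then $\Phi$ fixes each fiber setwise, sends $q_i^{\alpha_i}h^{\beta_i}\mapsto q_i^{\alpha_i}h^{\beta_i+k\alpha_i}$ and $q_j^{\alpha_j}h^{\beta_j}\mapsto q_j^{\alpha_j}h^{\beta_j-k\alpha_j}$, and extends over the solid-torus gluings to the homeomorphism realizing move~2.

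For the converse, let $f\colon M\to M'$ be a fiber-preserving homeomorphism between the manifolds of two symbols. First I would normalize both symbols, using moves~0 and~2 to arrange $0\le\beta_i<\alpha_i$ whenever $\alpha_i\ge 2$ and to collect all integer parts into a single ratio $b/1$, and using move~1 to delete every other $0/1$; it then suffices to show this normal form is a complete fiber-preserving invariant. Collapsing fibers, $f$ descends to a homeomorphism $\bar f\colon F\to F'$ of orbit surfaces, so $g=g'$, and $\bar f$ takes cone points to cone points. Since the order of an exceptional fiber is intrinsic to its fibered solid-torus neighborhood, restricting $f$ to these neighborhoods and invoking the classification of fibered solid tori produces a permutation $\pi$ with $\alpha_i=\alpha'_{\pi(i)}$ and $\beta_i\equiv\beta'_{\pi(i)}\pmod{\alpha_i}$. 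What is left — and this is the main obstacle — is to analyze the restriction $f\colon M_0\to M_0'$, a fiber-preserving homeomorphism $F_0\times S^1\to F_0'\times S^1$ between trivial circle bundles over homeomorphic bounded surfaces, and to deduce that the integer part $b$ is also matched.

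For this last step I would use that the fiber class generates the center of $\pi_1(F_0\times S^1)\cong\pi_1(F_0)\times\mathbb Z$, so $f$ sends fibers to fibers coherently, and that up to fiber-preserving isotopy $f$ factors as a homeomorphism of $F_0$ (extended by the product section) followed by an element of the gauge group $\mathrm{Map}(F_0,\mathrm{Homeo}^{+}(S^1))$, whose path components are classified by $H^1(F_0;\mathbb Z)$ and are therefore generated by twists along a standard system of arcs between boundary components and around handles. The handle twists fix $\partial(F_0\times S^1)$ up to isotopy and hence leave the symbol untouched; the base homeomorphism, acting orientation-preservingly on the cuffs, merely permutes the ratios, which is move~0; and each boundary-connecting arc twist alters a pair $m_i,m_j$ exactly as in move~2. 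Tracking the curves $m_i=q_i^{\alpha_i}h^{\beta_i}$ through this factorization then shows that the symbol of $M'$ arises from that of $M$ by moves~0 and~2 only, so that the residues $\beta_i\bmod\alpha_i$ (already matched) together with the integer part $b$ are forced to agree — the latter because move~2 preserves $\sum\beta_i/\alpha_i$. The one point requiring care is that the orientation data implicit in the symbol should make $f$ preserve the orientation of the circle fiber (and of the base), which is exactly what excludes the otherwise-possible spurious move $\beta_i/\alpha_i\mapsto-\beta_i/\alpha_i$ applied to all ratios; this must be built into what ``fiber preserving'' means here. Assembling the descent to the base, the fibered-solid-torus classification near the exceptional fibers, and this factorization of fiber-preserving homeomorphisms of $F_0\times S^1$ drives the two symbols to a common normal form, which is the assertion.
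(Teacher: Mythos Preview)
The paper does not prove this theorem at all: it is stated with the attribution \cite{NyR} and closed immediately with a $\Box$, so there is no ``paper's own proof'' to compare against. Your outline is therefore supplying something the authors deliberately omit.

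That said, your argument is essentially the standard one (going back to Seifert and spelled out in Orlik's and in Neumann--Raymond's treatments), and it is correct in outline. A couple of minor comments. First, in the ``only if'' direction you invoke the fact that the fiber generates the center of $\pi_1(F_0\times S^1)$ to conclude that $f$ ``sends fibers to fibers coherently''; but $f$ is fiber-preserving by hypothesis, so this step is unnecessary (and the center argument fails anyway when $F_0$ is a disk or an annulus). Second, your handling of the integer part $b$ can be streamlined: once you know the residues $\beta_i\bmod\alpha_i$ match and that moves~0--2 all preserve the rational sum $\sum\beta_i/\alpha_i$, the equality $b=b'$ follows immediately from the fact that the Euler number $e(M)=-\sum\beta_i/\alpha_i$ is an invariant of orientation-preserving fiber-preserving homeomorphism --- you don't strictly need the full factorization of the gauge group $\mathrm{Map}(F_0,\mathrm{Homeo}^{+}(S^1))$ through $H^1(F_0;\mathbb Z)$, though what you wrote is fine and gives more. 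Your closing remark about orientation conventions (ruling out the simultaneous sign flip $\beta_i/\alpha_i\mapsto-\beta_i/\alpha_i$) is exactly the right caveat and is indeed part of what the $Oo$ in the Seifert symbol encodes.
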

\hfill$\Box$

If
$M=\displaystyle(Oo,g;{\beta_1/\alpha_1},\ldots,{\beta_t/\alpha_t})$,
then the number $e(M)=-\sum\frac{\beta_i}{\alpha_i}$ is called the
\emph{Euler number} of $M$.

\begin{theorem}[\cite{NyR}]
Let $M$ and $M'$ be Seifert manifolds with base spaces $F$ and $F'$,
respectively, and let $f:M\rightarrow M'$ be an orientation preserving and
fiber preserving map. If the degree of the induced map on a typical
fiber is $n$, and the degree of the induced map $\bar f:F\rightarrow
F'$ is $m$, then $e(M)=(m/n)e(M')$.
\end{theorem}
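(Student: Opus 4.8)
The plan is to realize $e$ as the evaluation on the fundamental class of a rational Euler class that is natural under fiber-preserving maps; in this picture the base degree $m$ controls the evaluation on $[F]$, while the fiber degree $n$ measures the failure of $\bar f$ to pull back the Euler class exactly. Note first that $n\neq0$ (a fiber-preserving map carries fibers onto fibers), so $m/n$ makes sense; moreover $m$ and $n$ have the same sign, since the orientation of a Seifert manifold is the product of the orientations of base and fiber and $f$ is orientation preserving.

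First I would settle the case in which $M$ and $M'$ have no exceptional fibers, so that $p\colon M\to F$ and $p'\colon M'\to F'$ are oriented $S^{1}$-bundles with Euler classes $\mathfrak e\in H^{2}(F;\mathbb Z)$ and $\mathfrak e'\in H^{2}(F';\mathbb Z)$, and (with the fixed sign convention) $e(M)=\langle\mathfrak e,[F]\rangle$, $e(M')=\langle\mathfrak e',[F']\rangle$. Forming the pullback bundle $\bar f^{*}M'\to F$, the map $f$ factors as a fiber-preserving map $\tilde f\colon M\to\bar f^{*}M'$ over $\mathrm{id}_{F}$ whose restriction to each fiber has degree $n$. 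Up to a fiber-preserving homotopy over $\mathrm{id}_{F}$, $\tilde f$ is an $n$-th power map on fibers; comparing transition functions (equivalently, recognizing $\bar f^{*}M'$ as the circle bundle associated to $M$ by $z\mapsto z^{n}$) gives $\bar f^{*}\mathfrak e'=n\,\mathfrak e$ in $H^{2}(F;\mathbb Z)$. Evaluating on $[F]$ and using $\langle\bar f^{*}\mathfrak e',[F]\rangle=m\,\langle\mathfrak e',[F']\rangle$ yields $n\,e(M)=m\,e(M')$, which is the claim.

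For the general case I would repeat this with orbifold circle bundles over the base orbifolds $\mathcal F$, $\mathcal F'$ (cone points of orders $\alpha_{i}$, $\alpha_{j}'$): $f$ covers an orbifold map $\bar f\colon\mathcal F\to\mathcal F'$, the rational orbifold Euler class lies in $H^{2}(\mathcal F;\mathbb Q)\cong H^{2}(F;\mathbb Q)\cong\mathbb Q$ and equals $e(M)=-\sum\beta_{i}/\alpha_{i}$, the induced map on rational second cohomology is multiplication by $m$, and the orbifold pullback factorization again gives $\bar f^{*}e(M')=n\,e(M)$, hence $m\,e(M')=n\,e(M)$. Alternatively one can avoid orbifolds and argue on a frame: delete fibered solid-torus neighborhoods of the exceptional fibers of $M$ and of $M'$ so that over the punctured bases the bundles are trivial with canonical sections; recall from the frame description that $e(M)$ is the obstruction to capping such a section off over $F$, equal to $\sum(-\beta_{i}/\alpha_{i})$; then compute the analogous obstruction for $M'$ along $\bar f$ using the local normal form of a fiber-preserving map of fibered solid tori (governed by the branching degree of $\bar f$ on the base disk and by $n$) and match the two.

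The step I expect to be the main obstacle is exactly the bookkeeping at the exceptional fibers. Unlike in the bundle case, a fiber-preserving map need not send exceptional fibers to exceptional fibers of the same order, and it may carry an ordinary fiber of $M$ onto an exceptional fiber of $M'$ (locally a branched cover on the base) or an exceptional fiber of $M$ onto an ordinary fiber of $M'$; so each contribution $-\beta_{i}/\alpha_{i}$ interacts with $n$ through these local branching numbers, and one must verify that the rational Euler class remains natural under $\bar f$ — this is precisely what the orbifold pullback, or the careful patching of the local solid-torus models, is designed to make rigorous. Once the multiplicities are accounted for correctly, $n\,e(M)=m\,e(M')$ drops out as in the bundle case, and dividing by $n\neq0$ gives $e(M)=(m/n)e(M')$.
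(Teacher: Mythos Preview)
The paper does not supply its own proof of this theorem: it is quoted from \cite{NyR} and closed with a box, so there is nothing to compare your argument against line by line.

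That said, your sketch is a sound outline of the standard proof. The bundle case is correct as written: factoring $f$ through the pullback $\bar f^{*}M'$ and identifying the fiberwise degree-$n$ map with the $n$-th power map gives $\bar f^{*}\mathfrak e'=n\,\mathfrak e$, and pairing with $[F]$ yields $n\,e(M)=m\,e(M')$. For the general case, the orbifold-Euler-class route you propose is exactly how Neumann--Raymond package the argument; the alternative frame computation also works and is closer in spirit to how this paper handles Seifert invariants elsewhere. Your caveat about the bookkeeping at exceptional fibers is well placed: the local model of a fiber-preserving map on a fibered solid torus $(\alpha,\beta)\to(\alpha',\beta')$ contributes a branching index on the base disk, and one must check that the rational contributions $-\beta/\alpha$ transform consistently. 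In Neumann--Raymond this is absorbed into the naturality of the rational Euler class for Seifert bundles (equivalently, orbifold $S^1$-bundles), so no separate case analysis is needed once that naturality is in hand.

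One small point: your assertion that ``$n\neq0$ since a fiber-preserving map carries fibers onto fibers'' is not quite an argument---a continuous surjection $S^1\to S^1$ can have degree zero. What you actually need is that $f$ itself has nonzero degree (it is orientation preserving between closed oriented $3$-manifolds, hence degree $\pm1$ times something, but more to the point $\deg f=mn$ and the hypotheses in \cite{NyR} ensure this is nonzero). In the applications in this paper $f$ is a branched covering, so $\deg f=mn>0$ and the issue does not arise.
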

\hfill$\Box$

In particular if $\varphi:M\rightarrow M'$ is a branched covering
branched along fibers of $M'$, $e(M')=0$ if and only if $e(M)=0$.


\subsection{Basic Lemmas.}

\subsubsection{Factorization of coverings of Seifert manifolds.}
We show that any covering space of Seifert manifolds is
a product of 
simpler coverings.

\begin{lemma} 
  \label{lemma23}
  Assume that $G\leq S_n$ is a transitive group and
  $K\vartriangleleft G.$ 
  Assume that $K$ has $m$ orbits.
  Then there exist homomorphisms $q:G\to S_m$ and
  $\gamma:q^{-1}(St(1))\to S_{{n}/{m}}$ such that $St_G(1)\subset
  q^{-1}(St(1))$, and $q(K)=1$, and $\gamma(K)$ is transitive.
\end{lemma}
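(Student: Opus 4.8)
The plan is to first set $m=[G:H]$ where $H$ is the subgroup $\{g\in G: g\text{ preserves each orbit of }K\}$, or more precisely to build the quotient action of $G$ on the set of $K$-orbits. Since $K\vartriangleleft G$, the group $G$ permutes the $m$ orbits of $K$ on $\{1,\dots,n\}$ (conjugating by $g\in G$ carries a $K$-orbit to a $K$-orbit, because $K$ is normal). Numbering these orbits $O_1,\dots,O_m$ so that $1\in O_1$, this gives the homomorphism $q:G\to S_m$. Since $G$ is transitive on $\{1,\dots,n\}$ it is transitive on the orbits, so $q(G)$ is transitive; and $K$ fixes each of its own orbits setwise, so $q(K)=1$. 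Moreover $St_G(1)\subseteq q^{-1}(St(1))$ is immediate: if $g$ fixes the point $1$ then it fixes the orbit $O_1$ containing it.

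Next I would construct $\gamma$. The subgroup $q^{-1}(St(1))$ is exactly the stabilizer in $G$ of the orbit $O_1$, hence it acts on the set $O_1$, which has cardinality $n/m$ because the $K$-orbits all have the same size (this uses that $K\vartriangleleft G$ and $G$ is transitive: $G$ acts transitively on the $K$-orbits and this action is by bijections matching the orbits, so they are equinumerous). After numbering $O_1=\{1,\dots,n/m\}$, the action of $q^{-1}(St(1))$ on $O_1$ defines $\gamma:q^{-1}(St(1))\to S_{n/m}$. Since $K\le q^{-1}(St(1))$ and $K$ acts transitively on its orbit $O_1$ by the definition of "orbit," $\gamma(K)$ is transitive.

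The one point requiring a little care — and the main obstacle — is verifying that all $K$-orbits have the same cardinality, so that the symbol $S_{n/m}$ even makes sense; this is where normality of $K$ and transitivity of $G$ are both genuinely used. Everything else is a bookkeeping check: that the two maps $q$ and $\gamma$ are homomorphisms (clear, since each is an action on a set derived naturally from the $G$-action), that $q(K)=1$, that $St_G(1)\subset q^{-1}(St(1))$, and that $\gamma(K)$ is transitive. I would organize the write-up as: (i) define $q$ via the $G$-action on $K$-orbits and record $q(K)=1$, transitivity of $q(G)$, and $St_G(1)\subset q^{-1}(St(1))$; (ii) show the $K$-orbits are equinumerous of size $n/m$; (iii) define $\gamma$ via the action of the orbit-stabilizer $q^{-1}(St(1))$ on $O_1$ and note $\gamma(K)$ transitive.
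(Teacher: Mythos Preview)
Your proposal is correct and follows exactly the approach the paper has in mind: the paper's proof is the single sentence ``This follows by noting that the orbits of $K$ are a set of imprimitivity blocks for $G$,'' and your construction of $q$ (action of $G$ on the $K$-orbits) and $\gamma$ (action of the block-stabilizer on its block) is precisely how one unpacks that remark. Your care about equinumerosity of the $K$-orbits is the only nontrivial point hidden in the word ``imprimitivity,'' and you handle it correctly.
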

\begin{proof}
This follows by noting that the orbits of $K$ are a set of
imprimitivity blocks for $G$.
\end{proof}

\begin{corollary}
\label{coro24}
  Let  $M$ be a Seifert manifold, and
  let $\varphi:\widetilde{M}\to M$ be an~$n$-fold covering
  branched along fibers of $M$.
  Then there
  is a commutative diagram of coverings
  of Seifert manifolds branched along fibers
$$
\begin{tikzcd}
\widetilde M  \arrow[swap]{dd}{\varphi} \arrow{rd}{\varphi_\gamma} \\
  & N \arrow{ld}{\varphi_q} \\
M
\end{tikzcd}
$$
  such that $\varphi_q$ and $\varphi_\gamma$ are $m$-fold and
  $n/m$-fold branched coverings, respectively, and, if $h\subset M$ is
  an ordinary fiber, then
  $\varphi_q^{-1}(h)=h_1\sqcup\cdots\sqcup h_m$ with
  $\varphi_q|:h_i\rightarrow h$ a homeomorphism for $i=1,\dots,m$,  and
  $\varphi_\gamma^{-1}(\tilde h)$ is connected for $\tilde h$ any ordinary
  fiber of $N$.
  
\end{corollary}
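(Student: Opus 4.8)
The plan is to encode $\varphi$ as a permutation representation, locate inside the deck group the normal subgroup generated by the monodromy of a regular fiber, feed that subgroup into Lemma~\ref{lemma23}, and then translate its group-theoretic output back into the asserted topology of $\varphi_q$ and $\varphi_\gamma$. First I would assume $\widetilde M$ connected (treating components separately otherwise) and let $K\subset M$ be the union of the branching fibers. Then $M-K$ is an open Seifert manifold over an orientable surface-with-boundary orbifold, and the associated covering space of $\varphi$ determines a representation $\omega:\pi_1(M-K)\to S_n$ with transitive image $G:=\omega(\pi_1(M-K))$. Choosing an ordinary fiber $h\subset M-K$, the class $[h]$ is central in $\pi_1(M-K)$ precisely because the base orbifold is orientable; hence $N_0:=\langle\omega([h])\rangle$ is a (cyclic) normal subgroup of $G$. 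Its orbits form a system of imprimitivity blocks for $G$, so they all have a common size $n/m$ where $m$ is the number of orbits.

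Next I would apply Lemma~\ref{lemma23} to $N_0\vartriangleleft G$ to get $q:G\to S_m$ and $\gamma:q^{-1}(St(1))\to S_{n/m}$ with $St_G(1)\subseteq q^{-1}(St(1))$, with $q(N_0)=1$, and with $\gamma(N_0)$ transitive. The composite $q\omega:\pi_1(M-K)\to S_m$ has transitive image and defines a connected $m$-fold covering of $M-K$; since its only possible branching lies over fibers, this completes (Fox, \cite{Fox}) to an $m$-fold branched covering $\varphi_q:N\to M$ along fibers, with $N$ Seifert. The inclusion $\omega^{-1}(St_G(1))\subseteq\omega^{-1}\big(q^{-1}(St(1))\big)$ of subgroups of $\pi_1(M-K)$ exhibits the associated covering space of $\varphi$ as a covering of that of $\varphi_q$; completing this intermediate covering yields an $(n/m)$-fold branched covering $\varphi_\gamma:\widetilde M\to N$ along fibers, with representation induced by $\gamma$. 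Commutativity $\varphi=\varphi_q\circ\varphi_\gamma$ then follows because a branched covering is determined by its associated covering space, which here factors as the composite of the corresponding covering spaces.

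It remains to read off the fiber conditions. For an ordinary fiber $h\subset M$ disjoint from $K$, the monodromy of $\varphi_q$ around $h$ is $q(\omega([h]))\in q(N_0)=\{1\}$, so $\varphi_q^{-1}(h)$ is the trivial $m$-fold cover of a circle, i.e.\ $m$ fibers each mapped homeomorphically onto $h$; since all ordinary fibers of $M$ are isotopic, this holds for every ordinary fiber of $M$. Dually, for an ordinary fiber $\tilde h\subset N$ lying over such an $h$ (so $\varphi_q|:\tilde h\to h$ is a homeomorphism), the monodromy of $\varphi_\gamma$ around $\tilde h$ is $\gamma(\omega([h]))$, which generates $\gamma(N_0)$; as $\gamma(N_0)$ is transitive on $n/m$ symbols, $\varphi_\gamma^{-1}(\tilde h)$ is connected, and again this passes to every ordinary fiber of $N$ since transitivity of the cyclic group generated by the monodromy is invariant under the conjugation coming from an isotopy.

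The group theory is handled entirely by Lemma~\ref{lemma23}, so the main obstacle is the bookkeeping on the topological side: verifying that $[h]$ is central (this is exactly where orientability of the base is used, so the subgroup fed into the lemma is genuinely normal), identifying the monodromies of $\varphi_q$ and $\varphi_\gamma$ around a regular fiber with $q(\omega([h]))$ and $\gamma(\omega([h]))$, and checking that the tower of covering spaces completes compatibly to branched coverings \emph{along fibers} of Seifert manifolds so that the triangle commutes.
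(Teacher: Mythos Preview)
Your proposal is correct and follows essentially the same route as the paper: use that the ordinary fiber $h$ generates a normal (indeed central) subgroup of $\pi_1(M-B_\varphi)$, so its image under the monodromy representation is normal in the image, and then apply Lemma~\ref{lemma23}. The paper's proof is a two-line sketch of exactly this; your version simply fills in the bookkeeping that the paper leaves to the reader.
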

\begin{proof}
  Recall that the subgroup
  $\langle h\rangle\lhd\pi_1(M)$. Then $\omega(\langle h\rangle)\lhd Image(\omega)$
  where $\omega:\pi_1(M-B_\varphi)\rightarrow S_n$ is the representation
  associated to $\varphi$, and~$B_\varphi$ is the branching of $\varphi$. Lemma~\ref{lemma23} applies.

\end{proof}

\begin{remark}
\label{rem25}
In the statement of Corollary~\ref{coro24}, if  $\omega_q$,
and $\omega_\gamma$ are the representations corresponding to
 $\varphi_q$, and $\varphi_\gamma$, respectively, the
conclusions of Corollary~\ref{coro24} are that $\omega_q(h)=(1)$, and
$\omega_\gamma(\tilde h)$ is a cycle of order~$n/m$ for $\tilde h$ an ordinary
  fiber of $N$.
\end{remark}


\subsubsection{Coverings of Seifert manifolds.}
Let $(F\times S^1,\{m_i\}_{i=1}^t)$ be a frame for the
Seifert
symbol~$\displaystyle(Oo,g;{\beta_1/\alpha_1},\dots,{\beta_t/\alpha_t})$, and  
let $\omega:\pi_1(F\times S^1)\rightarrow S_n$ be a representation,
and let $\varphi:\tilde M\rightarrow F\times S^1$ be the covering
space associated to $\omega$. If $\{\tilde m_j\}$ is a set of
components of $\varphi^{-1}(\cup m_i)$, one for each component of
$\varphi^{-1}\left(\bigcup_{i=1}^t (q_i\times S^1)\right)$, then
$(\tilde M,\{\tilde m_j\}_{j=1}^u)$ 
is a frame for some Seifert symbol $(Oo,\tilde g;{B_1/A_1},\dots,{B_u/A_u})$.

In this section we compute the numbers $\tilde g,u,A_1,B_1,\dots,A_u,B_u$
for some `generic' representations~$\omega:\pi_1(F\times
S^1)\rightarrow S_n$ (see Remark~\ref{rem25}). Proofs of these result
can be found in~\cite{trefoil}. 


Write $\varepsilon\in S_n$ for the $n$-cycle $\varepsilon=(1,2,\dots,n)$.

\begin{lemma}
\label{lemma26}
 Let $(F\times S^1,\{m_i\}_{i=1}^t)$ be a frame for 
$\displaystyle(Oo,g;{\beta_1/\alpha_1},\dots,{\beta_t/\alpha_t})$. 
Let~$\omega:\pi_1(F\times S^1)\rightarrow S_n$ be a representation such
that $\omega(h)=\varepsilon^s$ with $(n,s)=1$, and
$\omega(q_i)=\varepsilon^{r_i}$ for $i=1,\dots,t$ with $\sum r_i=0$. 
Let $s^*$ be any integer such that
 $s^*s\equiv1\pmod n$.

If
$\varphi:\tilde 
M\rightarrow F\times S^1$ is the covering associated to $\omega$ and
$\tilde m_i$ is a component of $\varphi^{-1}(m_i)$, $i=1,\dots,t$,
then $(\tilde M,\{\tilde m_i\}_{i=1}^t)$ is a frame for
$\displaystyle(Oo,g;{B_1/A_1},\dots,{B_t/ A_t})$, where 
$$A_i={n\alpha_i\over d_i}, \hskip 7ex B_i={\beta_i+s^*r_i\alpha_i\over d_i}$$
and $d_i=(n,\beta_i+s^*r_i\alpha_i)$,
$i=1,\dots,t$.
\end{lemma}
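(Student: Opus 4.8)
The plan is to set up an explicit frame for the covering and track what happens to each boundary curve $m_i = q_i^{\alpha_i} h^{\beta_i}$. Since $\omega(h) = \varepsilon^s$ with $(n,s)=1$, the representation restricted to $\langle h\rangle$ is transitive, so $\varphi^{-1}(F\times S^1)$ is connected and $\varphi$ is an $n$-fold connected covering of $F\times S^1$. The first step is to determine the base: because $\omega(h)$ generates a transitive subgroup and each $\omega(q_i)$ is a power of $\varepsilon$ (hence commutes with $\omega(h)$), the covering $\widetilde M \to F\times S^1$ is fiberwise (it is pulled back from a covering of the $S^1$ factor once we restrict attention to the right subgroup), and one checks that the induced covering of the base surface $F$ is trivial of degree one; thus $\tilde g = g$ and there are exactly $t$ exceptional-fiber-neighborhoods upstairs, one $\tilde m_i$ over each $m_i$, which explains why the covering frame has $t$ ratios indexed the same way. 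This part is recorded in \cite{trefoil}; I would cite it and concentrate on the arithmetic.

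Next I would compute the pair $(A_i, B_i)$ for a fixed $i$ by working in the torus $q_i\times S^1$ and its preimage. On $\partial(F_0\times S^1)$ restricted to the $i$-th boundary component, $\pi_1$ is free abelian on $q_i$ and $h$, and $\omega$ sends $q_i \mapsto \varepsilon^{r_i}$, $h\mapsto \varepsilon^s$. The component $\tilde m_i$ of $\varphi^{-1}(m_i)$ is a single circle covering $m_i$ with some degree, and the torus $\tilde T_i$ over $q_i\times S^1$ is a connected $n$-fold cover whose deck group is cyclic; choosing the standard basis for $H_1(\tilde T_i)$ adapted to the cyclic cover determined by $\varepsilon$, the curve $h$ lifts to a loop $\tilde h$ that maps to $h$ with degree $n/(n, \text{something})$ — here is where $s^*$ enters: since $s^* s \equiv 1 \pmod n$, multiplying the exponent data by $s^*$ converts "the fiber $h$ goes to $\varepsilon^s$" into "a canonical generator goes to $\varepsilon$", so that in the new coordinates the class of $m_i = q_i^{\alpha_i} h^{\beta_i}$ gets exponent sum $\beta_i + s^* r_i \alpha_i$ along the deck direction and $\alpha_i$ times $n$ along the base circle direction (up to the usual normalization). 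Setting $d_i = (n, \beta_i + s^* r_i\alpha_i)$ and dividing out the common factor gives $A_i = n\alpha_i/d_i$ and $B_i = (\beta_i + s^* r_i\alpha_i)/d_i$, with $(A_i, B_i) = 1$ by construction. The condition $\sum r_i = 0$ is exactly what makes $\omega$ well-defined on $\pi_1(F_0\times S^1)$ when $F$ is closed (the product of the $q_i$ is nullhomotopic in $F_0$, well, bounds), so it is needed for the statement to make sense, not for the computation of the individual $A_i, B_i$.

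The main obstacle, and the only genuinely delicate point, is the bookkeeping of orientations and the precise normalization of the basis of $H_1(\tilde T_i)$: one must be careful that the lift $\tilde h$ and the lift $\tilde q_i$ are chosen consistently across all $i$ (so that gluing the solid-torus covers back in produces a legitimate Seifert symbol and not its mirror), and that the ambiguity in choosing $s^*$ (well-defined only mod $n$) changes $\beta_i + s^* r_i \alpha_i$ only by a multiple of $n\alpha_i$, hence changes $B_i/A_i$ only by an integer — which is move 2 of Theorem~\ref{tma21} and so does not change the homeomorphism type. I would verify this invariance explicitly as a sanity check. Everything else is the routine linear algebra of cyclic covers of the torus, which I would present compactly by passing to $H_1$ with the matrix $\begin{pmatrix} \alpha_i & \beta_i \\ -s^* r_i & \ast\end{pmatrix}$ read modulo the sublattice fixed by the deck transformation, and reading off the Smith-normal-form data.
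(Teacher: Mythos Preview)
The paper does not prove this lemma at all: it states the result, cites \cite{trefoil}, and closes with a $\Box$. So there is no in-paper argument to compare against; your proposal supplies what the paper omits.

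Your outline is correct and is essentially the standard argument (and, as far as one can tell from the citation, the one in \cite{trefoil}). Since $\omega(h)=\varepsilon^{s}$ is an $n$-cycle, the covering of $F_0\times S^1$ is entirely ``along the fiber'': after the automorphism of $\pi_1(F_0)\times\mathbb{Z}$ sending $q_i\mapsto q_i h^{-s^{*}r_i}$ and fixing $h$, the representation kills $\pi_1(F_0)$, so $\tilde M\cong F_0\times S^1$ with the same $F_0$, giving $\tilde g=g$ and exactly $t$ boundary tori. On each boundary torus the sublattice $\ker\bigl((a,b)\mapsto ar_i+bs\bmod n\bigr)$ has the basis $\{\tilde q_i\mapsto q_i-s^{*}r_i h,\ \tilde h\mapsto nh\}$ you describe, and expressing $m_i=\alpha_i q_i+\beta_i h$ in that basis, then dividing by the number $d_i=(n,\beta_i+s^{*}r_i\alpha_i)$ of components of $\varphi^{-1}(m_i)$, gives $A_i,B_i$ as stated. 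Your remark that $(n,r_i\alpha_i+s\beta_i)=(n,\beta_i+s^{*}r_i\alpha_i)$ because $(n,s^{*})=1$ is the one arithmetic identity worth making explicit, and your observation about the $s^{*}$-ambiguity and move~2 of Theorem~\ref{tma21} is the right sanity check. The condition $\sum r_i=0$ is, as you say, only there so that $\omega$ respects the relation $\prod q_i=1$ in $\pi_1(F_0)$ when $F$ is closed.
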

\hfill$\Box$


\begin{corollary}
\label{coro27}
 If $\sum r_i=0$, and $(n,s)=1$, and $s^*$ is any integer such that
 $s^*s\equiv1\pmod n$, then $\displaystyle(Oo,g;{B_1/ 
   A_1},\dots,{B_t/ A_t})$ is an $n$-fold branched covering of
 $\displaystyle(Oo,g;{\beta_1/\alpha_1},\dots,{\beta_t/
   \alpha_t})$, where 
$$A_i={n\alpha_i\over d_i}, \hskip 7ex B_i={\beta_i+s^*r_i\alpha_i\over d_i}$$
and $d_i=(n,\beta_i+s*r_i\alpha_i)$,
$i=1,\dots,t$.
\end{corollary}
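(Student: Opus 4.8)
The plan is to deduce this directly from Lemma~\ref{lemma26} by exhibiting a representation $\omega:\pi_1(F_0\times S^1)\to S_n$ of the form required there, where $F_0$ is the genus-$g$ surface with boundary $q_1\sqcup\cdots\sqcup q_t$ underlying a frame $(F_0\times S^1,\{m_i\}_{i=1}^t)$ for $(Oo,g;\beta_1/\alpha_1,\dots,\beta_t/\alpha_t)$. Recall that $\pi_1(F_0\times S^1)\cong\pi_1(F_0)\times\langle h\rangle$, that $h$ is central, and that $\pi_1(F_0)$ has a presentation with generators $a_1,b_1,\dots,a_g,b_g,q_1,\dots,q_t$ and the single relation $[a_1,b_1]\cdots[a_g,b_g]\,q_1\cdots q_t=1$.

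First I would define $\omega$ on generators by $\omega(a_j)=\omega(b_j)=(1)$ for $j=1,\dots,g$, $\omega(q_i)=\varepsilon^{r_i}$ for $i=1,\dots,t$, and $\omega(h)=\varepsilon^s$. To see this is a well-defined homomorphism I would check that (i) all the images lie in the abelian subgroup $\langle\varepsilon\rangle$, so they commute pairwise and in particular $\omega(h)$ is central, respecting the direct-product structure $\pi_1(F_0)\times\langle h\rangle$; and (ii) the surface relation is respected, since $\omega([a_1,b_1]\cdots[a_g,b_g]\,q_1\cdots q_t)=\varepsilon^{r_1+\cdots+r_t}=\varepsilon^0=(1)$ by the hypothesis $\sum r_i=0$. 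Since $(n,s)=1$, the element $\varepsilon^s$ is an $n$-cycle, so $Image(\omega)$ is transitive and the covering space $\varphi:\tilde M\to F_0\times S^1$ associated to $\omega$ is connected of $n$ sheets.

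Now $\omega$ satisfies all the hypotheses of Lemma~\ref{lemma26} ($\omega(h)=\varepsilon^s$ with $(n,s)=1$, $\omega(q_i)=\varepsilon^{r_i}$, and $\sum r_i=0$), so that lemma yields that $(\tilde M,\{\tilde m_i\}_{i=1}^t)$ is a frame for the Seifert symbol $(Oo,g;B_1/A_1,\dots,B_t/A_t)$ with $A_i$, $B_i$, $d_i$ exactly as in the statement. Finally, as recalled in Section~\ref{sec2}, a covering space of the boundary of a solid torus extends to a branched covering of the solid torus branched at most along its core; gluing the solid tori $V_i$ downstairs (along $m_i$) and the corresponding solid tori upstairs (along the $\tilde m_i$) thus completes $\varphi$ to an $n$-fold branched covering $(Oo,g;B_1/A_1,\dots,B_t/A_t)\to(Oo,g;\beta_1/\alpha_1,\dots,\beta_t/\alpha_t)$, branched along the cores of the $V_i$, i.e.\ along fibers.

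There is no real obstacle here, since the statement is essentially a repackaging of Lemma~\ref{lemma26}; the only point needing a little care is the well-definedness of $\omega$ — namely that the trivial choice on the handle generators together with $\sum r_i=0$ kills the surface relation, and that every generator is sent into $\langle\varepsilon\rangle$, which is forced anyway because $\omega(h)$ must commute with everything and $(n,s)=1$ makes $\varepsilon^s$ an $n$-cycle with centralizer $\langle\varepsilon\rangle$.
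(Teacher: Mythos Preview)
Your proposal is correct and is exactly the intended argument: the paper states Corollary~\ref{coro27} immediately after Lemma~\ref{lemma26} with no proof (just $\Box$), treating it as a direct consequence, and your write-up simply spells out that deduction by exhibiting the obvious representation (trivial on the handle generators, $\omega(q_i)=\varepsilon^{r_i}$, $\omega(h)=\varepsilon^s$) and completing the frame covering to a branched covering of the filled Seifert manifolds. There is nothing to add.
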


\hfill$\Box$

\begin{lemma}
\label{lemma28}
Let $(F\times S^1,\{m_i\}_{i=1}^t)$ be a frame for
$\displaystyle(Oo,g;{\beta_1/ 1}, \dots, {\beta_u/ 1},\newline
{\beta_{u+1}/\alpha_{u+1}}, \dots,
{\beta_{u+t}/\alpha_{u+t}})$ with $\alpha_j>1$ for $j=u+1, \dots,
u+t$. Let~$\omega:\pi_1(F\times S^1)\rightarrow S_n$ be a transitive
representation such that $\omega(h)=(1)$, and
$\omega(q_i)=\varepsilon^{s_i}$ with $(n,s_i)=1$ for $i=1, \dots, u$, and
$\omega(q_j)$ is a product of $n_j$ disjoint cycles of order $d_j$ and has $k_j$
fixed points for $j=u+1, \dots, u+t$. 

If
$\varphi:\tilde M\rightarrow F\times S^1$ is the covering associated
to $\omega$ and $\{\tilde m_j\}_j$ is a set of components of
$\varphi^{-1}(\bigcup_{i=1}^{u+t}m_i)$, one for each component of  
$\varphi^{-1}(\bigcup_{i=1}^{u+t}(q_i\times S^1))$, then $(\tilde
M,\{\tilde m_j\}_j)$ is a frame for 

\noindent
$\displaystyle(Oo,\tilde g;
{n\beta_1/1}, \dots, {n\beta_u/1},$ \newline
$\displaystyle\overbrace{{\beta_{u+1}/\alpha_{u+1}}, \dots,
  {\beta_{u+1}/\alpha_{u+1}}}^{k_{u+1}\ \rm times},$ 
$\displaystyle\overbrace{{\beta_{u+1}/\alpha'_{u+1}}, \dots, {\beta_{u+1}/\alpha'_{u+1}}}^{n_{u+1}\ \rm times},
\dots,$ \newline
$\displaystyle\overbrace{{\beta_{u+t}/\alpha_{u+t}}, \dots,
  {\beta_{u+t}/\alpha_{u+t}}}^{k_{u+t}\ \rm times}, 
\overbrace{{\beta_{u+t}/\alpha'_{u+t}}, \dots, {\beta_{u+t}/\alpha'_{u+t}}}^{n_{u+t}\ \rm times})$,

\noindent where
$\alpha'_i=\alpha_i/ d_i$ for $i=u+1, \dots, u+t$, and\newline
$\tilde g=(2-n\chi(F)+u(n-1)+\sum n_i(d_i-1))/2$.
\end{lemma}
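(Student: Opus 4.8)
The plan is to compute the frame $(\tilde M,\{\tilde m_j\}_j)$ directly from the covering $\varphi:\tilde M\to F\times S^1$ associated to $\omega$, treating the two kinds of boundary tori separately. First I would deal with the ambient surface. Since $\omega(h)=(1)$, each fiber $\{x\}\times S^1$ lifts to $n$ disjoint copies, so $\varphi$ restricts on the ``$F$-direction'' to the covering $\bar\varphi:\tilde F\to F$ of closed surfaces induced by the representation $\pi_1(F)\to S_n$ obtained from $\omega$; $\tilde M\cong\tilde F\times S^1$. The genus $\tilde g$ is then forced by the Riemann--Hurwitz formula $\chi(\tilde F)=n\,\chi(F)-\sum(\text{local branching defects})$: each $q_i$ with $i\le u$ contributes one branch point of the surface covering with a single cycle of length $n$, hence defect $n-1$; each $q_j$ with $j\ge u+1$ contributes $n_j$ cycles of length $d_j$, hence defect $\sum n_j(d_j-1)$. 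Solving $2-2\tilde g=n(2-2g)-u(n-1)-\sum n_i(d_i-1)$ gives exactly the stated $\tilde g=(2-n\chi(F)+u(n-1)+\sum n_i(d_i-1))/2$.

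Next I would identify the lifted curves $\tilde m_j$ over each boundary torus, which pins down $u$ (the number of ratios), the orders $A_j$, and the numerators $B_j$. Over the boundary torus $q_i\times S^1$ with $i\le u$ (where $\alpha_i=1$, $\omega(q_i)=\varepsilon^{s_i}$, $(n,s_i)=1$), the curve $q_i$ lifts to a single connected curve $\tilde q_i$ winding $n$ times, while $h$ lifts to $n$ parallel copies; since $m_i\sim q_i^{\,1}h^{\beta_i}=q_ih^{\beta_i}$, a component $\tilde m_i$ is homologous in the lifted torus to $\tilde q_i\,\tilde h^{\,n\beta_i}$ after choosing $\tilde h$ to be one lift of $h$ — giving the ratio $n\beta_i/1$, with exactly one such torus per original $q_i$, i.e.\ $u$ of them. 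Over $q_j\times S^1$ with $j\ge u+1$: a fixed point of $\omega(q_j)$ produces a torus over which both $q_j$ and $h$ lift homeomorphically, so $m_j\sim q_j^{\alpha_j}h^{\beta_j}$ lifts to a curve of ratio $\beta_j/\alpha_j$, and there are $k_j$ such tori; a cycle of $\omega(q_j)$ of length $d_j$ produces a torus over which $q_j$ lifts to a curve winding $d_j$ times and $h$ lifts to $d_j$ copies — a $d_j$-fold cyclic cover of $q_j\times S^1$ restricted to that torus, unramified in the $S^1$-direction — so a component of the preimage of $m_j=q_j^{\alpha_j}h^{\beta_j}$ has ratio $\beta_j/\alpha'_j$ with $\alpha'_j=\alpha_j/d_j=\alpha_j/(\alpha_j,d_j)$ (here one uses $d_j\mid\alpha_j$, which holds because the corresponding local solid-torus covering must be branched only along the core), and there are $n_j$ such tori. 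Assembling all the boundary data yields exactly the Seifert symbol in the statement.

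The main obstacle, and the step I would write most carefully, is the computation over a length-$d_j$ cycle of $\omega(q_j)$: one must check that the induced covering of the torus $q_j\times S^1$ is genuinely a $d_j$-fold cyclic cover with connected preimage-torus, compute the slope of the lift of $m_j$ in a correctly chosen basis of that torus's $H_1$, and verify that the resulting Seifert pair $(\alpha'_j,\beta_j)$ is coprime and correctly normalized — in particular that the numerator stays $\beta_j$ and not $\beta_j$ shifted by a multiple of $\alpha'_j$. This is precisely the kind of lattice computation carried out in the proof of Lemma~\ref{lemma26}; indeed the clean way to proceed is to reduce to Lemma~\ref{lemma26} applied locally (or to a sub-covering via Corollary~\ref{coro24}) near each exceptional fiber rather than redoing the boundary-torus homology from scratch, and I would cite \cite{trefoil} for the detailed verification while recording the genus count and the bookkeeping of how many tori of each type occur. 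The remaining steps — the Riemann--Hurwitz genus count and the $\alpha_i=1$ boundary tori — are routine.
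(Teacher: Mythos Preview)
The paper does not actually prove Lemma~\ref{lemma28}: it is stated with a bare $\Box$ and the surrounding text says ``Proofs of these results can be found in~\cite{trefoil}.'' So there is no argument in the paper to compare against beyond the citation, and your plan---which in its last paragraph also proposes to cite \cite{trefoil} for the detailed torus computation---is in that sense aligned with what the paper does.

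Your outline itself is correct and is the standard argument. Since $\omega(h)=(1)$, the representation factors through $\pi_1(F)$ and the covering is (surface covering)$\times S^1$; the genus then comes from Riemann--Hurwitz (equivalently, from multiplicativity of $\chi$ for the bounded surface together with the count of boundary components upstairs), and the boundary data from the cyclic torus covers over each $q_i\times S^1$. Two small remarks. First, your justification for $d_j\mid\alpha_j$ (``the local solid-torus covering must be branched only along the core'') is not really a consequence of the hypotheses of the lemma as stated---the lemma is about the frame $F\times S^1$ before any solid tori are glued in, so no relation $q_j^{\alpha_j}h^{\beta_j}=1$ is imposed. The divisibility $d_j\mid\alpha_j$ is an implicit hypothesis needed for the formula $\alpha'_j=\alpha_j/d_j$ to make sense (and it is satisfied in every application in the paper). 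Second, when you invoke Riemann--Hurwitz you pass tacitly from the bounded $F$ to the closed surface obtained by capping off the $q_i$; that is fine, but it is worth saying explicitly, since the paper's $\chi(F)$ in the genus formula is the Euler characteristic $2-2g$ of the closed orbit surface. Neither point affects the validity of your plan.
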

\hfill$\Box$


\begin{corollary}
\label{coro29}
With the numbers $d_i, n_i$ and $k_i$ as in  Lemma~2,

\noindent
$\displaystyle(Oo,\tilde g;$
$\displaystyle{n\beta_1/1}, \dots, {n\beta_u/1},$\newline
$\displaystyle\overbrace{{\beta_{u+1}/\alpha_{u+1}}, \dots, {\beta_{u+1}/\alpha_{u+1}}}^{k_{u+1}\ \rm times},$
$\displaystyle\overbrace{{\beta_{u+1}/\alpha'_{u+1}}, \dots, {\beta_{u+1}/\alpha'_{u+1}}}^{n_{u+1}\ \rm times},
\dots,$\newline
$\displaystyle\overbrace{{\beta_{u+t}/\alpha_{u+t}}, \dots, {\beta_{u+t}/\alpha_{u+t}}}^{k_{u+t}\ \rm times},$
$\displaystyle\overbrace{{\beta_{u+t}/\alpha'_{u+t}}, \dots, {\beta_{u+t}/\alpha'_{u+t}}}^{n_{u+t}\ \rm times})$

\noindent
is an $n$-fold branched covering of

\noindent
$\displaystyle(Oo,g;{\beta_1/1}, \dots, {\beta_u/1},
{\beta_{u+1}/\alpha_{u+1}}, \dots, {\beta_{u+t}/\alpha_{u+t}})$ 

\noindent
where $\alpha_i>1$, and
$\alpha'_i=\alpha_i/ d_i$ for $i=u+1, \dots, u+t$, and \newline
$\tilde g=(2-n\chi(F)+u(n-1)+\sum n_i(d_i-1))/2$.
\end{corollary}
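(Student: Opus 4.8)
The plan is to read Corollary~\ref{coro29} off directly from Lemma~\ref{lemma28}, using the general principle recalled in Section~\ref{sec2} that a covering space of a frame for a Seifert symbol extends to a branched covering of the corresponding Seifert manifold, branched along fibers.

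First I would fix the two Seifert manifolds explicitly. Let $M$ be the Seifert manifold with symbol $(Oo,g;\beta_1/1,\dots,\beta_u/1,\beta_{u+1}/\alpha_{u+1},\dots,\beta_{u+t}/\alpha_{u+t})$, assembled as $M=(F\times S^1)\cup(\bigcup V_i)$ from the frame $(F\times S^1,\{m_i\}_{i=1}^{u+t})$ with $m_i=q_i^{\alpha_i}h^{\beta_i}$, where $V_i$ is glued in so that $\mu_i\mapsto m_i$. Given the representation $\omega$ of Lemma~\ref{lemma28}, let $\varphi_0:\tilde M_0\to F\times S^1$ be the associated $n$-fold covering space; transitivity of $\omega$ makes $\tilde M_0$ connected. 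Then I would extend $\varphi_0$ over the solid tori: for each $i$, $\varphi_0^{-1}(q_i\times S^1)$ is a disjoint union of tori, each a covering space of $q_i\times S^1=\partial V_i$, and over each such torus one fills in a solid torus mapping to $V_i$ by a branched covering branched at most along the core of $V_i$ — this is exactly the solid-torus extension principle recalled in Section~\ref{sec2}. By construction the meridians of the filled-in solid tori are precisely the curves $\tilde m_j$ of $\varphi_0^{-1}(\bigcup m_i)$, so the resulting closed manifold $\tilde M$ is exactly the Seifert manifold having $(\tilde M_0,\{\tilde m_j\}_j)$ as a frame. By Lemma~\ref{lemma28} this is the Seifert symbol displayed in the statement, with $\alpha_i'=\alpha_i/d_i$ and $\tilde g=(2-n\chi(F)+u(n-1)+\sum n_i(d_i-1))/2$, and the map $\tilde M\to M$ is open, finite-to-one, proper, and an $n$-fold covering away from the cores of the $V_i$ and their preimages, hence an $n$-fold branched covering branched along fibers.

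I do not expect a serious obstacle: this is a bookkeeping translation of Lemma~\ref{lemma28}, whose real content is the identification of the frame $(\tilde M_0,\{\tilde m_j\}_j)$. The only point requiring a little care is verifying that the solid-torus fillings over the various components of $\varphi_0^{-1}(q_i\times S^1)$ are mutually consistent and glue up to the advertised Seifert manifold rather than to some manifold differing from it by the moves of Theorem~\ref{tma21}; but this is exactly what the frame formalism guarantees, since a frame determines its Seifert manifold unambiguously. (Note that, unlike Lemma~\ref{lemma26}, no balancing condition on the $\omega(q_i)$ is imposed, because $\omega(h)=(1)$ removes any constraint coming from the fiber; the existence of a suitable $\omega$ is part of the hypotheses inherited from Lemma~\ref{lemma28}.)
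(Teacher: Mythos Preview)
Your proposal is correct and is exactly the approach the paper intends: Corollary~\ref{coro29} is stated in the paper without proof (just a $\Box$), as an immediate consequence of Lemma~\ref{lemma28} together with the frame-to-Seifert-manifold completion principle recalled in Section~\ref{sec2}. Your write-up simply spells out that implicit argument.
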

\hfill$\Box$

\begin{remark}
\label{rem210}
In \cite{trefoil}, the statements of Lemma~\ref{lemma26} and Lemma~\ref{lemma28}
assume that the generators of the fundamental group of the orbit
surface are sent to the identity by the representation. Although we
only need that kind of covers in this paper, the hypothesis on the
generators of the surface is not needed for the conclusions of the lemmas.
\end{remark}

\subsubsection{Fiberings of $S^2\times S^1$.}
\label{sec223}
\begin{theorem}
Let $M$ be a Seifert manifold with at most two exceptional
fibers and base the 2-sphere. Then $M\cong S^2\times S^1$ if and only
if~$e(M)=0$. 
\hfill$\Box$
\end{theorem}

It follows that,

\begin{corollary}
Let $\alpha,\beta$ be coprime integers such that
$0\leq\beta\leq\alpha/2$. Then the Seifert fiberings
$(Oo,0;\beta/\alpha,-\beta/\alpha)$ are all the
Seifert fiberings of~$S^2\times S^1$. 
\hfill$\Box$
\end{corollary}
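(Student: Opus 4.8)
The plan is to establish both directions of the claimed equivalence. The easy direction is immediate from the results already quoted: any Seifert manifold of the form $(Oo,0;\beta/\alpha,-\beta/\alpha)$ has Euler number $e(M)=-\beta/\alpha+\beta/\alpha=0$, and it has base $S^2$ with at most two exceptional fibers, so by the preceding theorem it is homeomorphic to $S^2\times S^1$. Hence each such symbol does represent a Seifert fibering of $S^2\times S^1$. The substantive direction is the converse: that \emph{every} Seifert fibering of $S^2\times S^1$ arises, up to fiber-preserving homeomorphism, from a symbol of this normalized shape.

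For the converse, I would start from an arbitrary Seifert fibering of $S^2\times S^1$. Since $S^2\times S^1$ is a small Seifert manifold (its $\pi_1\cong\mathbb Z$ is infinite cyclic and in particular has no free subgroup of rank two, and its universal cover $S^2\times\mathbb R$ is not $\mathbb R^3$), any Seifert structure on it must have base an orbit surface with finite fundamental group, which forces the base to be $S^2$ (the orientable case) and the number of exceptional fibers to be at most three — and in fact, by the classification of small Seifert spaces, at most two for $S^2\times S^1$ specifically, since three exceptional fibers on $S^2$ with $e=0$ gives a manifold covered by a nontrivial torus bundle or a Nil/$\widetilde{PSL}$ manifold, never $S^2\times S^1$. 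Alternatively, and more in the spirit of this paper, one invokes the quoted theorem directly: a Seifert manifold with base $S^2$ and at most two exceptional fibers is $S^2\times S^1$ iff $e=0$, so it remains only to rule out bases other than $S^2$ and more than two exceptional fibers, which follows because $H_1(S^2\times S^1)\cong\mathbb Z$ constrains the Seifert data. So our fibering has a symbol $(Oo,0;\beta_1/\alpha_1,\beta_2/\alpha_2)$ with $e=-\beta_1/\alpha_1-\beta_2/\alpha_2=0$.

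The final step is normalization via Theorem~\ref{tma21}. From $e=0$ we get $\beta_1\alpha_2=-\beta_2\alpha_1$; combined with $(\alpha_i,\beta_i)=1$ this forces $\alpha_1=\alpha_2=:\alpha$ and $\beta_2=-\beta_1$. Writing $\beta:=\beta_1$, the symbol is $(Oo,0;\beta/\alpha,-\beta/\alpha)$. Now apply move 2 with $k$ chosen so that $\beta+k\alpha$ lands in the interval $[0,\alpha/2]$: since the pair transforms as $\beta/\alpha,-\beta/\alpha\mapsto(\beta+k\alpha)/\alpha,(-\beta-k\alpha)/\alpha$, we may replace $\beta$ by its residue mod $\alpha$, and then if that residue exceeds $\alpha/2$ we instead take $\alpha$ minus it together with move 0 (swapping the two ratios, which exchanges $\beta/\alpha$ and $-\beta/\alpha$). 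This puts the symbol in the claimed normal form $(Oo,0;\beta/\alpha,-\beta/\alpha)$ with $0\le\beta\le\alpha/2$. Finally I would note these normalized symbols are pairwise non-homeomorphic as \emph{fiberings} (distinct $(\alpha,\beta)$ give genuinely different Seifert structures, since $\alpha$ is the order of an exceptional fiber and $\beta/\alpha\bmod\mathbb Z$ is then an invariant of the fibering up to the allowed moves), so the list is exactly the set of Seifert fiberings of $S^2\times S^1$.

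The main obstacle is the structural input that pins down the base and the exceptional-fiber count: everything else is bookkeeping with Theorem~\ref{tma21}. In the write-up I would lean on the quoted theorem about Seifert manifolds with base $S^2$ and $\le 2$ exceptional fibers, reducing the work to showing no Seifert fibering of $S^2\times S^1$ can have base of positive genus, non-orientable base, or three or more exceptional fibers — which is a short $\pi_1$/homology argument given that $\pi_1(S^2\times S^1)\cong\mathbb Z$.
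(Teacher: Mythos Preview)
The paper gives no proof of this corollary at all: it is stated immediately after the preceding theorem with the words ``It follows that,'' and closed with a $\Box$. So there is no argument in the paper to compare yours against; the authors evidently regard the full classification of Seifert fiberings of $S^2\times S^1$ as known (from the standard references on Seifert manifolds) and are simply recording the normalized list.

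Your proposal is correct and supplies precisely the content the paper omits. You rightly observe that the preceding theorem only handles the direction ``base $S^2$, at most two exceptional fibers, $e=0$ $\Rightarrow$ $S^2\times S^1$,'' and that the converse requires an independent reason why every Seifert fibering of $S^2\times S^1$ has base $S^2$ with at most two exceptional fibers. Your $\pi_1$ argument is the right idea: from the exact sequence $1\to\langle h\rangle\to\pi_1(M)\to\pi_1^{orb}(B)\to1$ and $\pi_1(S^2\times S^1)\cong\mathbb{Z}$ one gets that $\pi_1^{orb}(B)$ is cyclic, which forces $B$ to be $S^2$ with at most two cone points or $\mathbb{RP}^2$ with none. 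The one detail you should tighten in a full write-up is the exclusion of the base $\mathbb{RP}^2$: the orientable Seifert manifold over $\mathbb{RP}^2$ with no exceptional fibers and vanishing obstruction is $\mathbb{RP}^3\#\mathbb{RP}^3$, not $S^2\times S^1$, so this case is ruled out, but it deserves a sentence. Your normalization step via Theorem~\ref{tma21} (moves 0 and 2) to reach $0\le\beta\le\alpha/2$ is clean and correct.
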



\section{Abelian coverings of $S^2\times S^1$.}
\label{sec3}
Let $t_{\alpha,\beta}$ be an ordinary fiber of
$(Oo,0;\beta/\alpha,-\beta/\alpha)$. 
We compute the Seifert invariants of the Abelian coverings of
$S^2\times S^1$ branched along~$t_{\alpha,\beta}$.

Write 
$M_0=S^2\times S^1-t_{\alpha,\beta}$.
Then we have the presentation
\begin{align*}
\pi_1(M_0)=\langle q_0,q_1,q_2,h:q_1^\alpha h^\beta=1,q_2^\alpha
h^{-\beta}&=1,q_0q_1q_2=1,\\
[q_i,h]&=1, i=0,1,2\rangle.\\
\end{align*}

Let $(F\times S^1,\{q_0,q_1^\alpha
h^\beta,q_1^\alpha h^{-\beta})$ be a frame for
$(Oo,0;0/1,\beta/\alpha,-\beta/\alpha)$.
Then an Abelian representation $\omega:\pi_1(F\times S^1)\rightarrow
S_n$ gives a 
branched covering of $(S^2\times S^1,t_{\alpha,\beta})$ if and only if
$\omega(q_1^\alpha h^\beta)=\omega(q_1^\alpha
h^{-\beta})=\omega(q_0q_1q_2)=(1)$.
Since $H_1(M_0)\cong \mathbb{Z}\oplus\mathbb{Z}_\alpha$, the image of
$\omega$ is cyclic or the sum of two cyclic groups. In
Section~\ref{sec32} we describe these image groups.

\subsection{The coverings}
Let $(F\times S^1,\{q_0,q_1^\alpha
h^\beta,q_1^\alpha h^{-\beta}\})$ be a frame for
$(Oo,0;0/1,\beta/\alpha,-\beta/\alpha)$.
Let $\omega:\pi_1(F\times S^1)\rightarrow S_n$ be a transitive representation
with Abelian image and $\omega(q_1^\alpha h^\beta)=\omega(q_1^\alpha
h^{-\beta})=\omega(q_0q_1q_2)=(1)$, and let $\varphi:\tilde M\rightarrow
(S^2\times S^1,t_{\alpha,\beta})$ be the associated branched covering. As in Remark~\ref{rem25}, $\varphi=\varphi_2\circ\varphi_1$ with
$\varphi_1:\tilde M\rightarrow N$ and $\varphi_2:N\rightarrow
S^2\times S^1$ branched coverings with corresponding
representations $\omega_1$ and~$\omega_2$, and~$\omega_1(\tilde h)$
one cycle and $\omega_2(h)=(1)$. 

We assume that $\omega(h)=(1)$ or
that~$\omega(h)$ is a cycle of order $n$. 

Write $\omega(q_1)=\sigma_1$, and $\omega(q_2)=\sigma_2$. 

\subsubsection{First case: $\omega(h)=(1)$.}
Since $\omega(q_i)^\alpha=\sigma_i^\alpha=(1)$, then the orders $o(\sigma_i)=a_i$ satisfy $a_i|\alpha$.

\emph{Case 1.1}: $a_1=1$; that is, $\sigma_1=(1)$. Then $\sigma_2$ is an $a_2$-cycle, and~$n=a_2$, and $\omega(q_0)=\sigma_2^{-1}$. Write $\alpha'=\alpha/a_2$. By Lemma~\ref{lemma28}, 
$$\tilde M=(Oo,0;\underbrace{\beta/\alpha,\dots, \beta/\alpha}_{a_2 \textrm{ times}},-\beta/\alpha'),$$
and $H_1(\tilde M)\cong \mathbb{Z}\oplus
\mathbb{Z}_{\alpha'}\oplus\bigoplus_{j=1}^{a_2-2}\mathbb{Z}_\alpha$. 

\emph{Case 1.2}: $a_2=1$; that is, $\sigma_2=(1)$. Then $\sigma_1$ is an $a_1$-cycle, and $n=a_1$, and $\omega(q_0)=\sigma_1^{-1}$. Write $\alpha'=\alpha/a_1$. By Lemma~\ref{lemma28}, 
$$\tilde M=(Oo,0;\beta/\alpha',\underbrace{-\beta/\alpha,\dots, -\beta/\alpha}_{a_1 \textrm{ times}}),$$
and $H_1(\tilde M)\cong \mathbb{Z}\oplus \mathbb{Z}_{\alpha'}\oplus\bigoplus_{j=1}^{a_1-2}\mathbb{Z}_\alpha$. 

\emph{Case 1.3}: $a_1,a_2>1$. The pair $\sigma_1,\sigma_2$ has a 4-tuple $(a_1,a_2,\delta,i_0)$, as in Section~\ref{sec32}, with $\delta|(a_1,a_2)$, and $0\leq i_0<\delta$, and $(i_0,\delta)=1$. Then~$n=a_1a_2/\delta$, and
\begin{itemize}
\item $\sigma_1$ is a product of $a_2/\delta$ cycles of order $a_1$.

\item $\sigma_2$ is a product of $a_1/\delta$ cycles of order $a_2$.

\item Write $\mu=(a_1,a_2/\delta+a_1i_0/\delta)$. Then the product $\sigma_1\sigma_2$ is a product of $ a_1a_2/(\mu\delta)$ cycles of order $\mu$. 
\end{itemize}
Write $\alpha_1=\alpha/a_1$, and $\alpha_2=\alpha/a_2$. By Lema~\ref{lemma28},
$$\tilde M=(Oo,g;\underbrace{\beta/\alpha_1,\dots,\beta/\alpha_1}_{a_2/\delta\textrm{ times}},\underbrace{-\beta/\alpha_2,\dots,-\beta/\alpha_2}_{a_1/\delta\textrm{ times}})$$
where 
$$g=1+\frac{(a_1-1)(a_2-1)-1}{2\delta}-\frac{\mu}{2},$$
and $$H_1(\tilde M)\cong
    \mathbb{Z}^{2g+1}\oplus\bigoplus_{i=3}^{(a_1+a_2)/\delta}\mathbb{Z}_{d_i}$$   
    where
    $$d_i=
    \begin{cases}
      {\alpha}/{(a_1,a_2)}, & \textrm{if } i\leq a_1/\delta,a_2/\delta,\\
      \alpha_1, &  \textrm{if }  a_1/\delta< i\leq a_2/\delta,\\
      \alpha_2, &  \textrm{if }  a_2/\delta< i\leq a_1/\delta,\\
      {\alpha}/{[a_1,a_2]}, &  \textrm{if }  a_1/\delta,a_2/\delta<i.\\
      \end{cases}
      $$ 

\begin{remark}
The genus $g$ of the orbit surface of $\tilde M$ is zero if and only if either $\delta=a_1=a_2$, or $\delta=1$ and $a_1=a_2=2$.

Indeed if $g=0$, then
$$2\delta+(a_1-1)(a_2-1)-1=\delta\mu\leq a_1,$$
and then
$$a_2-1\leq\frac{a_1-2}{a_1-1}\delta+\frac{1}{a_1-1}<\delta+1.$$
Therefore $a_2\leq \delta+1$. Since $\delta|a_2$ and $\delta\leq a_2$, it follows that either $a_2=\delta$, or $\delta=1$ and $a_2=2$.

Since the pair $\sigma_2,\sigma_1$ in reverse order has a 4-tuple
$(a_2,a_1,\delta,j_0)$, as in Section~\ref{sec32}, with $\delta|(a_1,a_2)$, and
$0\leq j_0<\delta$, and $(j_0,\delta)=1$, we see that
$\mu=(a_2,a_1/\delta+a_2j_0/\delta)$, and it follows, as above, that
either $a_1=\delta$, or $\delta=1$ and $a_1=2$. 

Then, either
\begin{itemize}
\item $\delta=a_1=a_2$, and $\varphi$ is an unbranched $a_1$-fold
  cyclic covering of $S^2\times S^1$, or 

\item $\delta=1$ and $a_1=a_2=2$. In this case, up to conjugation, $\sigma_1=(1,2)(3,4)$, and $\sigma_2=(1,3)(2,4)$, and, if $\alpha'=\alpha/2$, then
$\tilde M=(Oo,0;\beta/\alpha',\beta/\alpha',-\beta/\alpha',-\beta/\alpha').$
\end{itemize}

\end{remark}

\subsubsection{Second case: $\omega(h)=(1,2,\dots,n)=\varepsilon\in S_n$.}
Write $\omega(q_1)=\varepsilon^{r_1}$ and $\omega(q_2)=\varepsilon^{r_2}$. Since $\omega(q_1^\alpha h^\beta)=\varepsilon^{\alpha r_1+\beta}=(1)$, and $\omega(q_2^\alpha h^{-\beta})=\varepsilon^{\alpha r_2-\beta}=(1)$, it follows that
$$r_1\alpha+\beta\equiv0\mod{n}$$
and
$$r_2\alpha-\beta\equiv0\mod{n}.$$
Since $(\alpha,\beta)=1$, this two equations have solution if and only
if $(n,\alpha)=1$. Let $\alpha^*$ be an integer such that
$\alpha^*\alpha\equiv1\mod n$, and write $r_1=-\alpha^*\beta$ and
$r_2=\alpha^*\beta$; then $r_1+r_2=0$, and $r_1,r_2$ are solutions of
the system, if $(n,\alpha)=1$. Say $r_1\alpha+\beta=kn$; then
$r_2\alpha-\beta=-kn$. 

Since $\omega(q_0)=\omega((q_1q_2)^{-1})=\varepsilon^{-(r_1+r_2)}=(1)$, then $\varphi$ is an unbranched cyclic covering of $S^2\times S^1$.

Write $A_1=n\alpha/d_1$, $B_1=(\beta+r_1\alpha)/d_1$, and
$A_2=n\alpha/d_2$, $B_2=(-\beta+r_2\alpha)/d_2$ where
$d_1=(n,\beta+r_1\alpha)$ and $d_2=(n,-\beta+r_2\alpha)$. Then, by
Lemma~\ref{lemma26}, $\tilde M=(Oo,0;B_1/A_1,B_2/A_2)$. Notice that
$d_1=(n,\beta+r_1\alpha)=(n,kn)=n$. Then $A_1=\alpha$, and
$B_1=k$. Similarly $A_2=\alpha$, and $B_2=-k$. Then
$$\tilde M=(Oo,0;k/\alpha,-k/\alpha).$$
and $\tilde M\cong S^2\times S^1$.

\begin{theorem}
Let $\alpha,\beta$ be coprime integers with $0\leq\beta\leq\alpha/2$,
and let~$\omega:\pi_1(S^2\times S^1-t_{\alpha,\beta})\rightarrow S_n$
be a transitive representation with Abelian image.
Let $\varphi:\tilde
M\rightarrow(S^2\times S^1,t_{\alpha,\beta})$ be the
branched covering  associated with $\omega$. Let $h$ be an ordinary
fiber of $S^2\times S^1-t_{\alpha,\beta}$, and write the order
$o(\omega(q_i))=a_i$ where we write $a_i=1$ if $\omega(q_i)=(1)$. Also write $\alpha_i=\alpha/a_i$,
$i=1,2$.

\begin{enumerate}
\item If $\omega(h)=(1)$, then
  then $n=a_1a_2/\delta$ for some
    divisor $\delta|(a_1,a_2)$, and
    $$\tilde
    M=(Oo,g;\underbrace{\beta/\alpha_1,\dots,\beta/\alpha_1}_{a_2/\delta\textrm{
        times}}, 
    \underbrace{-\beta/\alpha_2,\dots,-\beta/\alpha_2}_{a_1/\delta\textrm{
        times}})$$ 
    where 
    $$g=1+\frac{(a_1-1)(a_2-1)-1}{2\delta}-\frac{\mu}{2},$$
    and $\mu=(a_1,a_2/\delta+a_1i_0/\delta)$ for some $i_0$ such that $(i_0,\delta)=1$,
    and $0\leq i_0<\delta$. Also

    $$H_1(\tilde M)\cong
    \mathbb{Z}^{2g+1}\oplus\bigoplus_{i=3}^{(a_1+a_2)/\delta}\mathbb{Z}_{d_i}$$   
    where
    $$d_i=
    \begin{cases}
      {\alpha}/{(a_1,a_2)}, & \textrm{if } i\leq a_1/\delta,a_2/\delta,\\
      \alpha_1, &  \textrm{if }  a_1/\delta< i\leq a_2/\delta,\\
      \alpha_2, &  \textrm{if }  a_2/\delta< i\leq a_1/\delta,\\
      {\alpha}/{[a_1,a_2]}, &  \textrm{if }  a_1/\delta,a_2/\delta<i.\\
      \end{cases}
      $$

      \noindent
      If $a_1,a_2>1$, then $g=0$ if and only if either $\delta=a_1=a_2$, 
      or $\delta=1$ and $a_1=a_2=2$. If $g=0$ and $\delta=a_1=a_2$, 
      then $\tilde M\cong S^2\times S^1$ and $\varphi$ is an unbranched 
      cyclic covering.

\item If $\omega(h)=(1,2,\dots,n)$, then $(n,\alpha)=1$, and $\varphi$
  is an unbranched covering space, and
  $$\tilde M=(Oo,0;k/\alpha,-k/\alpha)\cong S^2\times S^1$$
  where $k=(-\beta\alpha^*\alpha+\beta)/n$, and
  $\alpha^*\alpha\equiv1\pmod{n}$.

\end{enumerate}

\end{theorem}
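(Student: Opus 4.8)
The plan is to organize the proof around the factorization $\varphi=\varphi_2\circ\varphi_1$ provided by Corollary~\ref{coro24} and Remark~\ref{rem25}, where $\varphi_1\colon\tilde M\to N$ has $\omega_1(\tilde h)$ a single cycle and $\varphi_2\colon N\to S^2\times S^1$ has $\omega_2(h)=(1)$. Since the image of $\omega$ is abelian, $\varphi$ itself is a regular covering whose covering group is a quotient of $H_1(M_0)\cong\mathbb Z\oplus\mathbb Z_\alpha$, so the image of $\omega$ is cyclic or a product of two cyclic groups; this lets me reduce to the two cases $\omega(h)=(1)$ and $\omega(h)$ a single $n$-cycle that are handled in the body of Section~\ref{sec3}. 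The theorem is essentially the assembly of the case-by-case computations already carried out just above its statement.

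First I would treat part (1), where $\omega(h)=(1)$. Setting $\sigma_i=\omega(q_i)$, the relations $\sigma_i^\alpha=(1)$ force $a_i=o(\sigma_i)\mid\alpha$. I would dispose of the degenerate subcases $a_1=1$ (Case 1.1) and $a_2=1$ (Case 1.2) first: there $\sigma_{3-i}$ is a single $a_{3-i}$-cycle, $n=a_{3-i}$, $\omega(q_0)=\sigma_{3-i}^{-1}$, and a direct application of Lemma~\ref{lemma28} gives the stated Seifert symbol and first homology; these are exactly the $\delta=a_i$, trivial-cycle specializations of the general formula. For the main subcase $a_1,a_2>1$, I would invoke the normal form of Section~\ref{sec32}: an abelian pair of permutations $(\sigma_1,\sigma_2)$ generating a transitive subgroup of $S_n$ is, up to conjugacy, encoded by a $4$-tuple $(a_1,a_2,\delta,i_0)$ with $\delta\mid(a_1,a_2)$, $0\le i_0<\delta$, $(i_0,\delta)=1$, and then $n=a_1a_2/\delta$, $\sigma_1$ is $a_2/\delta$ cycles of length $a_1$, $\sigma_2$ is $a_1/\delta$ cycles of length $a_2$, and $\sigma_1\sigma_2$ (hence $\omega(q_0)=(\sigma_1\sigma_2)^{-1}$) is $a_1a_2/(\mu\delta)$ cycles of length $\mu$ where $\mu=(a_1,a_2/\delta+a_1i_0/\delta)$. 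Feeding these cycle data into Lemma~\ref{lemma28} with the frame $(F\times S^1,\{q_0,q_1^\alpha h^\beta,q_1^\alpha h^{-\beta}\})$ for $(Oo,0;0/1,\beta/\alpha,-\beta/\alpha)$ yields the Seifert symbol and the genus formula $g=1+\frac{(a_1-1)(a_2-1)-1}{2\delta}-\frac{\mu}{2}$ after simplifying $\tilde g=(2-n\chi(S^2)+u(n-1)+\sum n_j(d_j-1))/2$ with $\chi(S^2)=2$, $u=1$, and the three orbit contributions. The homology statement follows by abelianizing the resulting Seifert presentation, bookkeeping the orders $\alpha/(a_1,a_2)$, $\alpha_1$, $\alpha_2$, $\alpha/[a_1,a_2]$ of the torsion summands indexed as claimed. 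The genus-zero characterization is precisely the Remark preceding the theorem: $g=0$ forces $2\delta+(a_1-1)(a_2-1)-1=\delta\mu\le a_1$, whence $a_2\le\delta+1$ and, by $\delta\mid a_2$, either $a_2=\delta$ or ($\delta=1,a_2=2$); symmetry in the reversed pair $(\sigma_2,\sigma_1)$ gives the same for $a_1$; and when $\delta=a_1=a_2$ the branching cycle $\omega(q_0)$ is trivial, so $\varphi$ is unbranched, with $\tilde M$ a Seifert fibering of $S^2$-base with two exceptional fibers and Euler number $0$, hence $\tilde M\cong S^2\times S^1$.

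For part (2), where $\omega(h)=\varepsilon=(1,2,\dots,n)$, I would write $\omega(q_i)=\varepsilon^{r_i}$; the relations $q_1^\alpha h^\beta=q_2^\alpha h^{-\beta}=1$ become $r_1\alpha+\beta\equiv0$ and $r_2\alpha-\beta\equiv0\pmod n$, which (since $(\alpha,\beta)=1$) are solvable iff $(n,\alpha)=1$, with $r_1=-\alpha^*\beta$, $r_2=\alpha^*\beta$ for $\alpha^*\alpha\equiv1\pmod n$, so $r_1+r_2=0$ and $\omega(q_0)=\varepsilon^{-(r_1+r_2)}=(1)$: the covering is unbranched. Applying Lemma~\ref{lemma26} with $s=1$, $s^*=1$, $\sum r_i=0$, and noting $d_i=(n,\beta+r_i\alpha)=(n,kn)=n$ where $k=(r_1\alpha+\beta)/n=(-\alpha^*\beta\alpha+\beta)/n$, gives $A_i=\alpha$, $B_1=k$, $B_2=-k$, so $\tilde M=(Oo,0;k/\alpha,-k/\alpha)$, which is a Seifert fibering of $S^2$-base with two exceptional fibers and $e=0$, hence $\cong S^2\times S^1$.

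I expect the main obstacle to be the combinatorial normal form invoked in part (1): correctly establishing that an abelian transitive pair $(\sigma_1,\sigma_2)$ in $S_n$ is classified up to conjugacy by the $4$-tuple $(a_1,a_2,\delta,i_0)$ and computing the cycle structure of the product $\sigma_1\sigma_2$ — in particular the value $\mu=(a_1,a_2/\delta+a_1i_0/\delta)$ — is the delicate point, as the genus formula and the count $(a_1+a_2)/\delta-2$ of torsion summands both depend on it; this is the content promised in Section~\ref{sec32} and deferred there. Everything else is a careful but routine substitution into Lemmas~\ref{lemma26} and~\ref{lemma28} followed by abelianization, together with the two applications of the classification of Seifert fiberings of $S^2\times S^1$ to recognize the Euler-number-zero, two-exceptional-fiber outputs.
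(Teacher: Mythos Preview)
Your proposal is correct and follows essentially the same route as the paper: the theorem is indeed the assembly of the case analysis carried out in Section~\ref{sec3} (Cases 1.1--1.3 for $\omega(h)=(1)$, the second case for $\omega(h)=\varepsilon$), using Lemmas~\ref{lemma26} and~\ref{lemma28}, the normal form of Section~\ref{sec32}, and the genus-zero Remark, exactly as you describe. One small slip: in Case~1.3 you write ``$\chi(S^2)=2$, $u=1$'' when invoking Lemma~\ref{lemma28}, but there $\omega(q_0)=(\sigma_1\sigma_2)^{-1}$ is generally not a single $n$-cycle, so $q_0$ belongs with the $j$-indexed branch points (contributing $n_0(d_0-1)$ with $n_0=n/\mu$, $d_0=\mu$) rather than to the $u$-count; the genus formula you quote is nonetheless the correct output once this is fixed.
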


\begin{corollary}
Let $\varphi:\tilde M\rightarrow(S^2\times S^1,t_{\alpha,\beta})$ be
an $n$-fold Abelian branched covering.
If $H_1(\tilde M)\cong\mathbb{Z}$, then $\tilde M\cong S^2\times S^1$
and $\varphi$ is an unbranched cyclic covering space.
\end{corollary}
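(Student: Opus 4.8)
The plan is to extract the conclusion directly from the preceding Theorem, which has already enumerated every $n$-fold Abelian branched covering $\varphi\colon\widetilde M\to(S^2\times S^1,t_{\alpha,\beta})$ according to whether $\omega(h)=(1)$ or $\omega(h)$ is an $n$-cycle. So the proof is a case analysis driven by the constraint $H_1(\widetilde M)\cong\mathbb{Z}$, i.e. $\mathrm{rank}\,H_1(\widetilde M)=1$ and there is no torsion. First I would dispose of the case $\omega(h)=(1,2,\dots,n)$: the Theorem already says $\widetilde M\cong S^2\times S^1$ and $\varphi$ is unbranched cyclic, so nothing is to be done there and the hypothesis $H_1\cong\mathbb{Z}$ is automatically satisfied.

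The work is in the case $\omega(h)=(1)$. Here $H_1(\widetilde M)\cong\mathbb{Z}^{2g+1}\oplus\bigoplus_{i=3}^{(a_1+a_2)/\delta}\mathbb{Z}_{d_i}$, so $H_1(\widetilde M)\cong\mathbb{Z}$ forces both (i) $g=0$ and (ii) every $d_i=1$. I would split on the orders $a_1,a_2$. If $a_1=a_2=1$ (Case 1.3 with both trivial is excluded, so this is the degenerate subcase $\sigma_1=\sigma_2=(1)$, forcing $n=1$), then $\varphi$ is the identity and trivially $\widetilde M\cong S^2\times S^1$. If exactly one of $a_1,a_2$ is $1$, say $a_2=1<a_1$ (Case 1.2, symmetrically Case 1.1), then $g=0$ automatically but $H_1(\widetilde M)\cong\mathbb{Z}\oplus\mathbb{Z}_{\alpha'}\oplus\bigoplus_{j=1}^{a_1-2}\mathbb{Z}_\alpha$ where $\alpha'=\alpha/a_1$; requiring no torsion forces $\alpha'=1$ and $a_1-2\le 0$, hence $a_1=\alpha$ and $a_1=2$, so $\alpha=2$ — but then $\beta$ with $0\le\beta\le\alpha/2=1$ and $(\alpha,\beta)=1$ gives $\beta=1$, i.e. $t_{2,1}$, and one checks $\widetilde M=(Oo,0;1/1,-1/1)=(Oo,0;)\cong S^2\times S^1$; alternatively, if $\alpha=2,a_1=2$ one still has to verify $\widetilde M\cong S^2\times S^1$, which follows because its Euler number is $0$ and it has at most two exceptional fibers over $S^2$ (apply the fiberings theorem). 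Actually the cleanest uniform statement: whenever the torsion vanishes here we land on a symbol of the form $(Oo,0;\beta/\alpha',-\beta/\alpha')$-type with $g=0$ and $\le 2$ exceptional fibers and $e=0$, hence $\cong S^2\times S^1$.

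Finally, if $a_1,a_2>1$ (Case 1.3), I invoke the Remark proved in the excerpt: $g=0$ iff either $\delta=a_1=a_2$ or ($\delta=1$ and $a_1=a_2=2$). In the first subcase the Theorem already records $\widetilde M\cong S^2\times S^1$ and $\varphi$ unbranched cyclic, done. In the second subcase $\widetilde M=(Oo,0;\beta/\alpha',\beta/\alpha',-\beta/\alpha',-\beta/\alpha')$ with $\alpha'=\alpha/2$; then $H_1(\widetilde M)$ has torsion part $\bigoplus_{i=3}^{4}\mathbb{Z}_{d_i}$ with $d_3=\alpha/(a_1,a_2)=\alpha/2=\alpha'$ and $d_4=\alpha/[a_1,a_2]=\alpha/2=\alpha'$ (using $a_1/\delta=a_2/\delta=2$, so only the first and last branches of the $d_i$ formula occur); requiring $H_1\cong\mathbb{Z}$ forces $\alpha'=1$, i.e. $\alpha=2$, hence again $\beta=1$ and $\widetilde M=(Oo,0;1/1,1/1,-1/1,-1/1)$, which after applying move 2 of Theorem~\ref{tma21} and deleting $0/1$'s reduces to $(Oo,0;)\cong S^2\times S^1$; it is unbranched because $a_1=a_2=2=\alpha$ means $\sigma_i^\alpha=(1)$ came from full-order elements and $\omega(q_0)=\sigma_2^{-1}\sigma_1^{-1}$ — one checks whether this is trivial; in fact here $\sigma_1\sigma_2=(1,4)(2,3)\ne(1)$, so $\varphi$ is genuinely branched, but the branching locus is a collection of ordinary fibers (order-$1$ exceptional fibers) so $\widetilde M$ is still $S^2\times S^1$. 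The only real subtlety, and the step I expect to need the most care, is making sure that in every borderline case where the computed Seifert symbol is not literally $(Oo,0;)$ I correctly apply Theorem~\ref{tma21} (moves $1$ and $2$) together with the fiberings theorem for $S^2\times S^1$ to conclude $\widetilde M\cong S^2\times S^1$, and that I correctly read off from the covering data whether $\varphi$ is branched or unbranched and, when unbranched, that it is cyclic (which follows because the Abelian image with $\omega(h)=(1)$ and all $\omega(q_i)$ determined reduces to a cyclic quotient once $\delta=a_1=a_2$).
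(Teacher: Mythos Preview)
The paper offers no separate proof; the Corollary is asserted as an immediate consequence of the preceding Theorem, so your plan of reading off the conclusion by a case analysis is exactly what is intended.

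There is, however, a genuine gap in your argument---one that your own computations actually expose. In every subcase where your torsion conditions force $\alpha=2$ (Case~1.1/1.2 with $a_i=2$, and Case~1.3 with $\delta=1$, $a_1=a_2=2$) you verify $\widetilde M\cong S^2\times S^1$ but you never establish that $\varphi$ is \emph{unbranched}, and in fact it is not. You yourself note that $\omega(q_0)=(\sigma_1\sigma_2)^{-1}=(1,4)(2,3)\neq(1)$ in the last subcase, so the meridian of $t_{2,1}$ maps nontrivially and $\varphi$ is branched along $t_{2,1}$; worse, the image group there is $\mathbb{Z}_2\oplus\mathbb{Z}_2$, not cyclic. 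The same happens in Case~1.2 with $\alpha=a_1=2$: $\omega(q_0)=(1,2)\neq(1)$, again branched. So you have not proved the Corollary; you have produced counterexamples when $\alpha=2$. Under the extra hypothesis $\alpha\geq3$ your torsion arguments do eliminate all of these subcases, leaving only $\delta=a_1=a_2$ (unbranched cyclic) when $\omega(h)=(1)$, and then the argument goes through cleanly. (Minor slip: in Case~1.2 the symbol should be $(Oo,0;1/1,-1/2,-1/2)$, not $(Oo,0;1/1,-1/1)$.)

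A smaller omission: both the Theorem and your analysis treat only the extremes $\omega(h)=(1)$ or $\omega(h)$ a full $n$-cycle. For a general Abelian $\omega$ one must invoke the factorization of Corollary~\ref{coro24}/Remark~\ref{rem25} to reduce to these, and you should say a word about why the conclusion survives the composition.
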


\subsection{Abelian permutation groups generated by two
  elements.}
\label{sec32}
Let $\sigma_1,\sigma_2\in S_n$ be a pair of permutations such that the group $G=\langle\sigma_1,\sigma_2\rangle$ is Abelian and transitive.

We associate a 4-tuple of integers $(a_1,a_2,\delta,i_0)$ to the pair $\sigma_1,\sigma_2$.

Assume that the order $o(\sigma_i)=a_i$ where we write $a_i=1$, if $\sigma_i=(1)$ $(i=1,2)$. Then $n=o(G)=a_1a_2/\delta$ for some divisor $\delta|(a_1,a_2)$. 

Let $T$ be a torus with a CW structure with one vertex $v$, two
1-cells, $\gamma_1,\gamma_2$ and one 2-cell $d$. Since $\sigma_1$ and
$\sigma_2$ commute, the assignment~$\omega:\pi_1(T)\rightarrow S_n$
such that $\omega(\gamma_i)=\sigma_i$ defines a representation. Let~$\varphi:\tilde T\rightarrow T$ be the covering space associated to
$\omega$. Then $\tilde T$ admits a CW structure with 
cells the liftings of the cells of $T$.


The 1-skeleton of this CW structure of $\tilde T$ gives a portrait of
$\sigma_1$ and~$\sigma_2$: Draw a grid of $a_2/\delta$ horizontal,
parallel lines with $a_1+1$ equally spaced vertices each, so that,
after identifying the ends of a line, we obtain a circle with $a_1$
vertices, and complete the grid with $a_1$ vertical lines with
$a_2/\delta+1$ vertices. Now the top and the bottom of the grid are
identified in such a way that the vertical lines become $a_1/\delta$
circles with~$a_2$ vertices each: that is, $\delta$ equally spaced
vertical lines become a circle.  

Then for some number $i_0$ such that $(\delta, i_0)=1$, and $0\leq
i_0<\delta$, after identifying the sides of the grid with the
identity, the top of the grid is identified with the bottom with a
twist that identifies the top of the first vertical line with the
bottom of the $(i_0\delta)$-th vertical line. 

The pair $\sigma_1,\sigma_2$ can be recovered from the 4-tuple
$(a_1,a_2,\delta,i_0)$, up to conjugation. Notice that if we reverse
the order of $\sigma_1,\sigma_2$, the pair~$\sigma_2,\sigma_1$ has a 4-tuple of the form $(a_1,a_2,\delta,j_0)$ with
$(\delta, j_0)=1$, and $0\leq j_0<\delta$ (usually $i_0\neq j_0$). 

The product $\sigma_1\sigma_2$ is drawn on $\tilde T$ as the union of
the liftings of the diagonal of the 2-cell $d$ of $T$ which is
homotopic to $\gamma_1\gamma_2$. Therefore the order 
$$o(\sigma_1\sigma_2)=\frac{a_1a_2/\delta}{(a_1,(i_0a_1+a_2)/\delta)}.$$

\section{Universals.}
\label{sec4}
Write $|X|$ for the number of components of the space $X$.
In this section we fix integers $\alpha,\beta$ such that
$(\alpha,\beta)=1$, $0\leq\beta\leq\alpha/2$. 
We write $t_{\alpha,\beta}\subset S^2\times S^1$ for  the torus knot
which is an ordinary fiber of~$(Oo,0;\beta/\alpha,-\beta/\alpha)$.

\begin{lemma}
\label{lemma41}
There exists $\varphi:(Oo,0;)\rightarrow
(S^2\times S^1,t_{\alpha,\beta})$ a $2\alpha$-fold branched
covering such that $|\varphi^{-1}(t_{\alpha,\beta})|=2\alpha-2$.
\end{lemma}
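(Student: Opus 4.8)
The goal is to produce a $2\alpha$-fold branched covering $(Oo,0;)\to(S^2\times S^1,t_{\alpha,\beta})$ whose preimage of the branch knot has exactly $2\alpha-2$ components; here $(Oo,0;)$ is $S^3$ (genus $0$, no exceptional fibers, Euler number $0$), since its frame is just $S^2\times S^1$ with no Dehn fillings modified — wait, more precisely $(Oo,0;)$ is the trivial $S^1$-bundle over $S^2$, i.e. $S^2\times S^1$ itself. Actually $(Oo,0;)=S^2\times S^1$, so the claim is that $S^2\times S^1$ is a $2\alpha$-fold branched cover of $(S^2\times S^1,t_{\alpha,\beta})$. The plan is to build the associated covering space of the frame explicitly via a representation $\omega:\pi_1(F\times S^1)\to S_{2\alpha}$ and then read off the Seifert invariants from Lemma~\ref{lemma28}.

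First I would work with the frame $(F\times S^1,\{q_0,\,q_1^\alpha h^\beta,\,q_2^\alpha h^{-\beta}\})$ for $(Oo,0;0/1,\beta/\alpha,-\beta/\alpha)$, where $\pi_1(F\times S^1)$ has the presentation recorded in Section~\ref{sec3}, with $q_0q_1q_2=1$ and $h$ central. I want a \emph{non-Abelian} representation this time (the Abelian ones were classified in Section~\ref{sec3} and never give $S^3$ with the right branching count), so I would set $\omega(h)=(1)$ and choose $\omega(q_1)=\sigma_1$, $\omega(q_2)=\sigma_2$ to be permutations in $S_{2\alpha}$ each of order $\alpha$ — e.g. $\sigma_1$ a product of two disjoint $\alpha$-cycles and $\sigma_2$ another product of two disjoint $\alpha$-cycles, chosen so that $\langle\sigma_1,\sigma_2\rangle$ is transitive on $2\alpha$ symbols and so that the product $\sigma_0^{-1}=\sigma_1\sigma_2$ has the cycle structure needed to kill the genus. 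The numbers $n_j$ (number of disjoint cycles) and $k_j$ (number of fixed points) for $\sigma_1,\sigma_2,\sigma_1\sigma_2$ feed directly into Lemma~\ref{lemma28}: to get $(Oo,0;)$ we need the resulting genus $\tilde g=(2-n\chi(F)+u(n-1)+\sum n_i(d_i-1))/2$ to vanish and all the resulting ratios to cancel/trivialize. With $F=S^2$, $\chi(F)=2$, $n=2\alpha$, and the ordinary-fiber generator $q_0$ contributing through its own cycle structure, I would solve for cycle data making $\tilde g=0$; a product of two $\alpha$-cycles for each of $\sigma_1,\sigma_2$ has $n_i=2$, $d_i=\alpha$, contributing $2(\alpha-1)$ each.

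Then I would count $|\varphi^{-1}(t_{\alpha,\beta})|$. The branch knot $t_{\alpha,\beta}$ is the ordinary fiber $h$, but the branching in the completed cover happens over the cores $V_1,V_2$ corresponding to the exceptional ratios $\beta/\alpha$ and $-\beta/\alpha$ — so $\varphi^{-1}(t_{\alpha,\beta})$ decomposes according to how $\omega(q_1)=\sigma_1$ and $\omega(q_2)=\sigma_2$ permute sheets near those two solid tori: the number of components over each exceptional fiber is the number of disjoint cycles of $\sigma_i$ plus its fixed points, i.e. $n_i+k_i$. With $\sigma_1,\sigma_2$ each a product of two $\alpha$-cycles and no fixed points, each contributes... hmm, that gives only $2+2=4$ components, not $2\alpha-2$. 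So I would instead take $\sigma_1$ to be a \emph{single} $\alpha$-cycle on $\alpha$ symbols with $\alpha$ fixed points — no: order-$\alpha$ with $\alpha$ fixed points among $2\alpha$ gives $1$ cycle $+\,\alpha$ fixed points $=\alpha+1$ components over that fiber; doing the symmetric thing for $\sigma_2$ gives another $\alpha+1$, but the overlap/fixed-point bookkeeping and the transitivity requirement (the representation must be transitive, so the fixed-point sets of $\sigma_1$ and $\sigma_2$ must be linked through $q_0$) needs care, and the total should come out to $2\alpha-2$.

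The main obstacle is exactly this simultaneous constraint: choosing $\sigma_1,\sigma_2\in S_{2\alpha}$ with $\langle\sigma_1,\sigma_2\rangle$ transitive, both of order $\alpha$, with the genus formula of Lemma~\ref{lemma28} forcing $\tilde g=0$ (which strongly restricts the cycle structures of $\sigma_1$, $\sigma_2$, and $\sigma_1\sigma_2$ simultaneously, via $u(n-1)+\sum n_i(d_i-1)=n\chi(F)-2=4\alpha-2$), while the branch-count $n_1+k_1+n_2+k_2=2\alpha-2$ holds, and all the exceptional ratios produced trivialize to give no exceptional fibers. I expect the right choice is a ``staircase'' permutation pair: $\sigma_1$ and $\sigma_2$ arranged so that together they act transitively by shifting around a cycle of length $2\alpha$ but individually have many short orbits. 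I would then verify the frame-covering computation of Lemma~\ref{lemma28} applies (using Remark~\ref{rem210} to drop the surface-generator hypothesis, though here $F=S^2$ has no such generators anyway), read off that $\tilde M=(Oo,0;)$, and independently check the component count of $\varphi^{-1}(t_{\alpha,\beta})$ equals $2\alpha-2$ by the fiber-preimage rule. Writing down one explicit such pair $\sigma_1,\sigma_2$ and tabulating its data is the concrete heart of the argument.
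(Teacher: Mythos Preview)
There is a genuine gap, and it stems from a misidentification of which fiber is $t_{\alpha,\beta}$. In the frame $(Oo,0;0/1,\beta/\alpha,-\beta/\alpha)$, the torus knot $t_{\alpha,\beta}$ is the \emph{ordinary} fiber marked by the ratio $0/1$, i.e.\ the fiber corresponding to $q_0$ --- not the exceptional fibers tied to $q_1,q_2$. Consequently $|\varphi^{-1}(t_{\alpha,\beta})|$ is governed entirely by the cycle structure of $\omega(q_0)$: it equals the number of orbits of $\omega(q_0)$ on the $2\alpha$ sheets. Your attempt to realize $2\alpha-2$ as $n_1+k_1+n_2+k_2$ (cycle data of $\sigma_1,\sigma_2$) is counting components over the wrong fibers, and this is why the numbers refuse to balance and you are pushed toward speculative ``staircase'' permutations that never get written down.

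Once the bookkeeping is corrected the construction becomes direct. The paper simply takes $\omega(h)=(1)$ and chooses $\omega(q_1)$, $\omega(q_2)$ each to be a product of two disjoint $\alpha$-cycles, arranged so that $\omega(q_2)$ is almost the inverse of $\omega(q_1)$: explicitly
\[
\omega(q_1)=(1,\dots,\alpha-1,2\alpha)(\alpha,\alpha+1,\dots,2\alpha-1),\qquad
\omega(q_2)=(2\alpha,\dots,\alpha+1)(\alpha,\dots,1).
\]
Then $\omega(q_0)=(\omega(q_1)\omega(q_2))^{-1}=(\alpha-1,2\alpha-1)(\alpha,2\alpha)$ has $2\alpha-4$ fixed points and two transpositions, hence $2\alpha-2$ orbits, giving the required component count. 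Since each $\omega(q_i)$ for $i=1,2$ consists of $\alpha$-cycles only, the exceptional orders $\alpha$ are completely unwound ($\alpha'=\alpha/\alpha=1$), and by Lemma~\ref{lemma28} the cover is $(Oo,0;\beta/1,\beta/1,-\beta/1,-\beta/1)=(Oo,0;)$. Your general plan (representation with $\omega(h)=(1)$, compute via Lemma~\ref{lemma28}) is the right framework; the missing idea is that the branch-count condition lives over $q_0$, which frees you to choose $\sigma_1,\sigma_2$ purely to kill the exceptional orders and then read off $\omega(q_0)$ as whatever their product forces.
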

\begin{proof}
Let $(F\times S^1,\{m_0,m_1,m_2\})$ be a frame for
$(Oo,0;0/1,\beta/\alpha,-\beta/\alpha)$, and let $\omega:F\times
S^1\rightarrow S_{2\alpha}$ be the representation such that
\begin{align*}
\omega(h)&=(1)\\
\omega(q_0)&=(\alpha-1,2\alpha-1)(\alpha,2\alpha)\\
\omega(q_1)&=(1,2,\dots,\alpha-2,\alpha-1,2\alpha)(\alpha,\alpha+1,\alpha+2,\dots,2\alpha-2,2\alpha-1)\\
\omega(q_2)&=(2\alpha,2\alpha-1,\dots,\alpha+2,\alpha+1)(\alpha,\alpha-1,\dots,2,1)\\
\end{align*}
Notice that $\omega(m_1)=\omega(q_1^\alpha h^\beta)=(1)$, and
$\omega(m_2)=\omega(q_1^\alpha h^{-\beta})=(1)$, and
$\omega(q_0q_1q_2)=(1)$. By Lemma~\ref{lemma28}, the completion of the
covering associated with $\omega$ is
$$\varphi:(Oo,0;\frac\beta1,\frac\beta1,\frac{-\beta}1,\frac{-\beta}1)\rightarrow (Oo,0;\frac01,\frac\beta\alpha,\frac{-\beta}\alpha),$$
and is branched along the $0/1$-fiber, that is, is branched along
$t_{\alpha,\beta}$. Since $\omega(q_0)$ has $2\alpha-4$ fixed points
and two 2-cycles, therefore $|\varphi^{-1}(t_{\alpha,\beta})|=2\alpha-2$.
\end{proof}

\begin{lemma}
\label{lemma42}
If $k>0$ and $\alpha\geq3$, then there exists $\varphi:(Oo,0;)\rightarrow
(S^2\times S^1,t_{\alpha,\beta})$ a branched
covering such that $|\varphi^{-1}(t_{\alpha,\beta})|\geq k$.
\end{lemma}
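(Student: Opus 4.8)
The plan is to bootstrap Lemma~\ref{lemma41} by iterated composition with double branched covers. Lemma~\ref{lemma41} gives a branched covering $\varphi_1\colon(Oo,0;)\to(S^2\times S^1,t_{\alpha,\beta})$ whose source, as a covering of a frame, is $(Oo,0;\beta/1,\beta/1,-\beta/1,-\beta/1)$. This Seifert symbol has no exceptional fibers, so by Theorem~\ref{tma21} it is fiber-preservingly homeomorphic to the product fibering $(Oo,0;)$; hence, after composing $\varphi_1$ with such a homeomorphism of the source, we may assume $\varphi_1^{-1}(t_{\alpha,\beta})$ is a union of $\ell_0=2\alpha-2$ regular fibers of the product fibering, i.e.\ a link of the form $\{p_1,\dots,p_{\ell_0}\}\times S^1\subset S^2\times S^1$. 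Since $\alpha\geq3$ we have $\ell_0\geq4$.

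The inductive step is the following move. Suppose $\psi\colon(Oo,0;)\to(S^2\times S^1,t_{\alpha,\beta})$ is a branched covering with $L:=\psi^{-1}(t_{\alpha,\beta})=\{p_1,\dots,p_\ell\}\times S^1$ and $\ell\geq4$. Let $\rho\colon S^2\to S^2$ be the degree-two cover branched over $\{p_1,p_2\}$, and set $c=\rho\times\mathrm{id}_{S^1}\colon S^2\times S^1\to S^2\times S^1$; this is the double cover of $S^2\times S^1$ corresponding to the homomorphism $\pi_1(S^2\times S^1-L)\to\mathbb{Z}/2$ sending the meridians of the first two components to $1$ and the meridians of the remaining components together with the fiber class $h$ to $0$, completed to a branched cover branched exactly along $\{p_1,p_2\}\times S^1\subset L$. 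Because $h$ goes to $0$, $c$ splits as the product written above, so its total space is again $S^2\times S^1=(Oo,0;)$. Moreover $c^{-1}(L)=\rho^{-1}(\{p_1,\dots,p_\ell\})\times S^1$ is again a link of product form, with $1+1+2(\ell-2)=2\ell-2$ components: the two branched fibers contribute one component each and the other $\ell-2$ contribute two each (trivial monodromy). Finally, since the branch set of $c$ lies in $L$, hence maps under $\psi$ into $t_{\alpha,\beta}$, the composite $\psi\circ c\colon(Oo,0;)\to(S^2\times S^1,t_{\alpha,\beta})$ is again a branched covering branched along $t_{\alpha,\beta}$, now with $2\ell-2$ preimages of $t_{\alpha,\beta}$, and $c^{-1}(L)$ has the product form required to repeat the move. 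Iterating $j$ times starting from $\varphi_1$ yields $\ell_j=2^{j}(2\alpha-4)+2$ preimages; as $2\alpha-4\geq2$ this tends to infinity, so some iterate exceeds any prescribed $k$.

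The only point requiring genuine care — and the one I expect to be the main obstacle — is the first paragraph: pinning down the Seifert structure on the source of $\varphi_1$ and checking that, as a \emph{fibered} manifold, it is the product, so that $\varphi_1^{-1}(t_{\alpha,\beta})$ is a trivial fibered link $\{p_i\}\times S^1$ rather than some linked family of unknots (this is exactly what lets the double branched cover over two of its components stay $S^2\times S^1$). It is also where the hypothesis $\alpha\geq3$ enters: one needs $\ell_0\geq4>2$ so that each move strictly increases the component count while preserving it being $\geq4$; for $\alpha=2$ one has $\ell_0=2$ and the induction does not even start — consistent with the non-universality of $t_{2,1}$. Everything else is routine: that $c$ is a branched covering of $S^2\times S^1$ branched exactly over two fibers of $L$, that its total space is $(Oo,0;)$, and the bookkeeping of components under $c$.
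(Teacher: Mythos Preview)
Your proof is correct and uses essentially the same idea as the paper: start from Lemma~\ref{lemma41} and then compose with cyclic covers of $(Oo,0;)$ branched over two of the product fibers lying in the preimage of $t_{\alpha,\beta}$, which keeps the total space equal to $(Oo,0;)$ while multiplying the number of preimage fibers. The only difference is that the paper does this in a single step---it takes one $k$-fold cyclic cover branched over two fibers $e_1,e_2\subset\psi^{-1}(t_{\alpha,\beta})$ and observes that a third fiber $e_3\subset\psi^{-1}(t_{\alpha,\beta})$ then has exactly $k$ preimages---whereas you arrive at the same conclusion by iterating double covers.
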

\begin{proof}
By previous lemma, there is $\psi:(Oo,0;)\rightarrow
(S^2\times S^1,t_{\alpha,\beta})$ with
$|\psi^{-1}(t_{\alpha,\beta})|=2\alpha-2\geq4$. Let $(F\times
S^1,\{m_1,m_2\})$ be a frame for $(Oo,0;0/1,0/1)$ where the
$0/1$-fibers $e_1,e_2$ are contained in $\psi^{-1}(t_{\alpha,\beta})$. Let $\omega:F\times
S^1\rightarrow S_k$ be the representation such that 
$\omega(h)=(1)$,
$\omega(q_1)=\varepsilon$, and
$\omega(q_2)=\varepsilon^{-1}$.
where $\varepsilon=(1,2,\dots,k)\in S_k$. By Lemma~\ref{lemma28}, the completion of the
covering associated with $\omega$ is
$$\varphi:(Oo,0;\frac01,\frac01)\rightarrow (Oo,0;)$$
and is branched along $e_1\cup e_2$. Let $e_3\subset
\psi^{-1}(t_{\alpha,\beta})$ be a fiber different from $e_1$ and
$e_2$. Then $|\varphi^{-1}(e_3)|=k$, and the lemma follows.
\end{proof}

\begin{lemma}
\label{lemma43}
Let $g\geq0$, and $k\geq 2g+2$ be integers, and assume $\alpha\geq3$.
Then there exists a branched
covering $\varphi:(Oo,g;)\rightarrow
(S^2\times S^1,t_{\alpha,\beta})$ such that
$|\varphi^{-1}(t_{\alpha,\beta})|\geq k$.
\end{lemma}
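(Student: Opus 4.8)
The plan is to build the covering $\varphi:(Oo,g;)\to(S^2\times S^1,t_{\alpha,\beta})$ as a composition $(Oo,g;)\xrightarrow{\;\psi'\;}(Oo,0;)\xrightarrow{\;\psi\;}(S^2\times S^1,t_{\alpha,\beta})$, where $\psi$ is the covering furnished by Lemma~\ref{lemma42} with enough preimage components of $t_{\alpha,\beta}$ (how many will be determined below), and $\psi'$ is an unbranched covering of $(Oo,0;)=S^2\times S^1$ that raises the genus of the orbit surface to $g$ while still having many components over the remaining fibers sitting above $t_{\alpha,\beta}$. First I would apply Lemma~\ref{lemma42} to produce $\psi:(Oo,0;)\to(S^2\times S^1,t_{\alpha,\beta})$ with $|\psi^{-1}(t_{\alpha,\beta})|\geq N$ for an $N$ to be chosen (something like $N\geq 2g+1+k$ should comfortably suffice), so that I have $N$ disjoint fibers $e_1,\dots,e_N\subset\psi^{-1}(t_{\alpha,\beta})$ available in $S^2\times S^1$.

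Next I would take a frame $(F\times S^1,\{m_1,\dots,m_{2g+1}\})$ for $(Oo,0;0/1,\dots,0/1)$ (with $2g+1$ trivial ratios) whose $0/1$-fibers $e_1,\dots,e_{2g+1}$ lie in $\psi^{-1}(t_{\alpha,\beta})$, and define a transitive representation $\omega:\pi_1(F\times S^1)\to S_n$ with $\omega(h)=(1)$ designed so that, via Lemma~\ref{lemma28}, the completion of the associated cover is an unbranched covering $\psi':(Oo,g;)\to(Oo,0;)$. Concretely, one can arrange $n=2$ and send each pair of generators of the surface group to transpositions whose product over the handle is trivial, exactly as in the standard construction of the genus-$g$ orientable double cover; the formula $\tilde g=(2-n\chi(F)+u(n-1)+\sum n_i(d_i-1))/2$ of Lemma~\ref{lemma28} then outputs orbit genus $g$ provided the $0/1$-fibers are split ($d_i=1$, $k_i=n$), which forces $\omega(q_i)=(1)$ on those fibers while the genus is produced entirely by the nontrivial images of the surface generators (legitimate by Remark~\ref{rem210}). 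Then $|\psi'^{-1}(e_j)|=n$ for each $j>2g+1$ with $e_j\subset\psi^{-1}(t_{\alpha,\beta})$, so for $\varphi=\psi\circ\psi'$ we get $|\varphi^{-1}(t_{\alpha,\beta})|\geq n(N-2g-1)\geq k$ once $N$ and $n$ are taken large enough; in fact even $n=2$ and $N\geq 2g+1+\lceil k/2\rceil$ works, or one can bump $n$ instead.

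The main obstacle is the bookkeeping in the representation $\omega$: I must simultaneously (i) make $\omega$ transitive, (ii) keep $\omega(h)=(1)$, (iii) kill $\omega(q_i)$ on the $2g+1$ fibers that are to be split (so that Lemma~\ref{lemma28} records them with multiplicity $n$ and contributes nothing exceptional), and (iv) choose the images of the $2g$ (or appropriately paired) surface generators so that the Euler-characteristic count in Lemma~\ref{lemma28} yields exactly orbit genus $g$ rather than something larger — while all the relations in $\pi_1(F\times S^1)$ (the surface relation and $[q_i,h]=1$) remain satisfied by an abelian or dihedral image. The cleanest route is the double cover: pick $\omega$ factoring through $H_1$ of the surface onto $\mathbb{Z}_2$ so that exactly the required fibers are split and $\chi$ is halved, giving $\tilde g = g$; transitivity is automatic for a connected double cover. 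One should double-check that $(Oo,g;)$ with no ratios really is the resulting symbol (no stray $0/1$ ratios), which follows after applying move~1 of Theorem~\ref{tma21} to delete them, and that the count $k\geq 2g+2$ in the hypothesis is exactly what is needed so that after consuming $2g+1$ fibers for the handles at least one fiber — indeed at least $k$ preimage components — of $t_{\alpha,\beta}$ survives in the final cover.
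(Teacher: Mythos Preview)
Your plan has a genuine gap: the second map $\psi':(Oo,g;)\to(Oo,0;)$ that you describe cannot exist as an \emph{unbranched} covering for $g\geq1$. The target $(Oo,0;)$ is $S^2\times S^1$ with orbit surface $S^2$, so in the frame $(F\times S^1,\{m_1,\dots,m_{2g+1}\})$ the punctured surface $F$ has genus~$0$ and hence \emph{no} handle generators $a_i,b_i$ at all; the only generators of $\pi_1(F\times S^1)$ are $h$ and the $q_i$. If you set $\omega(h)=(1)$ and $\omega(q_i)=(1)$ for every $i$, the representation is trivial and the cover is disconnected. There is nothing left to ``send to transpositions'' to produce transitivity or genus. (Equivalently: $\pi_1(S^2\times S^1)\cong\mathbb{Z}$, so every connected finite unbranched cover of $S^2\times S^1$ is again $S^2\times S^1$, never $\Sigma_g\times S^1$ with $g\geq1$.) Remark~\ref{rem210} is about relaxing where the surface generators are sent, not about conjuring them on a sphere.

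The fix---and this is exactly what the paper does---is to make $\psi'$ a \emph{branched} double cover. Take $2g+2$ fibers $e_1,\dots,e_{2g+2}\subset\psi^{-1}(t_{\alpha,\beta})$ (an even number is required so that $\prod\omega(q_i)=(1)$ in $S_2$), and set $\omega(h)=(1)$, $\omega(q_i)=(1,2)$ for $i=1,\dots,2g+2$. By Lemma~\ref{lemma28} the cover is $(Oo,g;)$. For the count: starting from $|\psi^{-1}(t_{\alpha,\beta})|\geq k$, the $2g+2$ branch fibers each have one preimage and the remaining $\geq k-(2g+2)$ fibers each have two, giving at least $2g+2+2(k-2g-2)=2k-2g-2\geq k$ preimages, the last inequality being exactly the hypothesis $k\geq 2g+2$.
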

\begin{proof}
By previous lemma, there is $\varphi:(Oo,0;)\rightarrow
(S^2\times S^1,t_{\alpha,\beta})$ a branched
covering such that $|\varphi^{-1}(t_{\alpha,\beta})|\geq k\geq 2g+2$. 
Then there are~$2g+2$ different fibers $e_1,e_2,\dots,e_{2g+2}\subset |\varphi^{-1}(t_{\alpha,\beta})|$. The double cyclic covering of~$(Oo,0;)$
branched along $e_1\cup e_2\cup\dots\cup e_{2g+2}$ is $(Oo,g;)$ (the double branched covering, in
terms of a representation $\omega$ of a frame for $(Oo,0;0/1,\dots,0/1)$ into~$S_2$, is~$\omega(h)=(1)$, and
$\omega(q_i)=(1,2)$ for~$i=1,2,\dots,2g+2$).
\end{proof}

\begin{lemma}
\label{lemma44}
Let $g\geq0$ and $k>0$ be integers, and assume $\alpha\geq3$. If~$b/a$ is
a reduced rational number, then there exists a branched
covering~$\varphi:(Oo,g;b/a,-b/a)\rightarrow
(S^2\times S^1,t_{\alpha,\beta})$ such that
$|\varphi^{-1}(t_{\alpha,\beta})|\geq k$
and~$\varphi^{-1}(t_{\alpha,\beta})$ contains the fibers of the ratios
of $(Oo,g;b/a,-b/a)$.
\end{lemma}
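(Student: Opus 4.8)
The plan is to realise $\varphi$ as a composition $\psi\circ\rho$. Here $\psi\colon(Oo,g;)\to(S^2\times S^1,t_{\alpha,\beta})$ will be the branched covering furnished by Lemma~\ref{lemma43}, chosen with very many preimages of the branch knot, and $\rho\colon(Oo,g;b/a,-b/a)\to(Oo,g;)$ will be an $a$-fold cyclic branched covering of the kind governed by Lemma~\ref{lemma26} and Corollary~\ref{coro27} --- the crucial feature being that such a cover (with $\omega(h)$ an $a$-cycle, not a transposition as in the genus-raising covers) does \emph{not} change the genus --- which creates the two required exceptional fibers over two prescribed components of $\psi^{-1}(t_{\alpha,\beta})$. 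We may take $a>0$. The case $a=1$ is immediate: then $(Oo,g;b/1,-b/1)\cong(Oo,g;)$ by move~2 of Theorem~\ref{tma21}, and Lemma~\ref{lemma43}, applied with $\max\{k,2\}$, already supplies a $\psi$ two of whose $\ge k$ branch-preimage fibers can serve as the fibers of the ratios. So assume $a\ge2$; since $b/a$ is reduced, $(a,b)=1$ and $b\ne0$.

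First I would invoke Lemma~\ref{lemma43} with $k$ replaced by $\max\{k,\,2g+2,\,2\}$, getting $\psi\colon(Oo,g;)\to(S^2\times S^1,t_{\alpha,\beta})$ with $|\psi^{-1}(t_{\alpha,\beta})|\ge k$. Because $(Oo,g;)$ has no exceptional fibers, $\psi^{-1}(t_{\alpha,\beta})$ is a disjoint union of ordinary fibers of $(Oo,g;)$; I would pick two of them, $e_1$ and $e_2$, and use move~1 of Theorem~\ref{tma21} to rewrite $(Oo,g;)=(Oo,g;0/1,0/1)$ so that the two $0/1$-fibers are precisely $e_1$ and $e_2$. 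Fix a frame $(F\times S^1,\{m_1,m_2\})$ for this symbol, so that $m_i=q_i$.

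Next I would define $\omega\colon\pi_1(F\times S^1)\to S_a$ by $\omega(h)=\varepsilon=(1,2,\dots,a)$, $\omega(q_1)=\varepsilon^{\,b}$, $\omega(q_2)=\varepsilon^{-b}$, and $\omega=(1)$ on the genus generators of $F$. This is a homomorphism (the genus commutators die and $\omega(q_1)\omega(q_2)=\varepsilon^{\,b-b}=(1)$), it is transitive (image $\langle\varepsilon\rangle$), and $\omega(h)=\varepsilon^{s}$ with $s=1$ coprime to $a$. Taking $s^{*}=1$ and applying Lemma~\ref{lemma26} --- valid here directly, since the surface generators map to the identity, and in any case the Seifert-invariant formulas are unaffected by their images, Remark~\ref{rem210} --- the completion $\rho\colon\tilde M\to(Oo,g;)$ of the associated covering satisfies $\tilde M=(Oo,g;B_1/A_1,B_2/A_2)$ with $d_1=(a,b)=1=d_2$, hence $A_1=A_2=a$, $B_1=b$, $B_2=-b$; that is, $\tilde M=(Oo,g;b/a,-b/a)$ (this is exactly where $(a,b)=1$ is used). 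The fibers of its two ratios are the cores $\rho^{-1}(e_1)$ and $\rho^{-1}(e_2)$, and since the covering of the frame $F\times S^1$ is an honest covering space, the branching of $\rho$ is contained in $e_1\cup e_2$.

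Finally I would set $\varphi=\psi\circ\rho\colon(Oo,g;b/a,-b/a)\to(S^2\times S^1,t_{\alpha,\beta})$. As $\psi$ is branched along $t_{\alpha,\beta}$ and the branching of $\rho$ lies in $e_1\cup e_2\subseteq\psi^{-1}(t_{\alpha,\beta})$, the composite is a branched covering of $(S^2\times S^1,t_{\alpha,\beta})$, with $\varphi^{-1}(t_{\alpha,\beta})=\rho^{-1}\!\left(\psi^{-1}(t_{\alpha,\beta})\right)$. This set contains $\rho^{-1}(e_1)$ and $\rho^{-1}(e_2)$, i.e.\ the fibers of the ratios of $\tilde M$; and since $\rho$ is surjective, each of the $\ge k$ components of $\psi^{-1}(t_{\alpha,\beta})$ contributes at least one component, so $|\varphi^{-1}(t_{\alpha,\beta})|\ge k$, as required. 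The one genuinely delicate point is the use of Lemma~\ref{lemma26} in positive genus --- which is what Remark~\ref{rem210} licenses, and which is precisely what guarantees that this $a$-fold cover keeps the genus equal to $g$; everything else is bookkeeping with Theorem~\ref{tma21} and with compositions of branched coverings.
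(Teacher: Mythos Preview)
Your proof is correct and follows essentially the same approach as the paper: obtain $\psi$ from Lemma~\ref{lemma43} with enough branch preimages, then apply a cyclic $a$-fold cover governed by Lemma~\ref{lemma26} to create the $b/a$ and $-b/a$ ratios over two chosen fibers of $\psi^{-1}(t_{\alpha,\beta})$. The only cosmetic differences are that the paper works with a frame for $(Oo,g;0/1,0/1,b/1,-b/1)$ using four marked fibers and the representation $h\mapsto\varepsilon^{b^*}$, $q_i\mapsto\varepsilon^{\pm1}$, whereas you use two marked fibers with $h\mapsto\varepsilon$, $q_i\mapsto\varepsilon^{\pm b}$; also, in your $a=1$ case you should invoke Lemma~\ref{lemma43} with $\max\{k,2g+2\}$ rather than $\max\{k,2\}$ to meet its hypothesis.
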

\begin{proof}
By previous lemma, there is a branched
covering $\psi:(Oo,g;)\rightarrow
(S^2\times S^1,t_{\alpha,\beta})$ with
$|\psi^{-1}(t_{\alpha,\beta})|\geq k$. Let $(F\times S^1,\{m_0,m_1,m_2,m_3\})$ be a frame for
$(Oo,g;0/1,0/1,b/1,-b/1)$, and let $\omega:F\times
S^1\rightarrow S_a$ be the representation such that 
\begin{align*}
\omega(h)&=\varepsilon^{b^*}\\
\omega(q_0)&=\omega(q_2)=\varepsilon^{-1}\\
\omega(q_1)&=\omega(q_3)=\varepsilon\\
\end{align*}
where $b^*b\equiv1\pmod{a}$, and $\varepsilon=(1,2,\dots,a)\in S_a$.
By Lemma~\ref{lemma28}, the completion of the
covering associated with $\omega$ is
$$\varphi:(Oo,g;\frac ba,\frac{-b}a,\frac01,\frac01)\rightarrow (Oo,g;)$$
and is branched along the fibers of the ratios of $(Oo,g;0/1,0/1,b/1,-b/1)$.
\end{proof}

\begin{theorem}
\label{thm45} Assume that $\alpha\geq3$.
If $\sum_{i=1}^t\beta_1/\alpha_i=0$, then there exists
a branched
covering 
$\varphi:(Oo,g;\beta_1/\alpha_1,\dots,\beta_t/\alpha_t)\rightarrow(S^2\times S^1,t_{\alpha,\beta})$   such that~$\varphi^{-1}(t_{\alpha,\beta})$ contains the fibers of the ratios 
of 
\newline
$(Oo,g;\beta_1/\alpha_1,\dots,\beta_t/\alpha_t)$ and some extra
$0/1$ fibers.
\end{theorem}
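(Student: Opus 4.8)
The plan is to realize $\varphi$ as a composite $\psi\circ\varphi_0$ of two branched coverings: $\psi\colon(Oo,g;)\to(S^2\times S^1,t_{\alpha,\beta})$ supplied by Lemma~\ref{lemma43}, and a covering $\varphi_0\colon(Oo,g;\beta_1/\alpha_1,\dots,\beta_t/\alpha_t)\to(Oo,g;)$ branched along finitely many \emph{ordinary} fibers of $(Oo,g;)$ which we arrange to lie over $t_{\alpha,\beta}$. This is precisely what Lemmas~\ref{lemma41}--\ref{lemma43} are for: for any $k$ we may take $\psi$ with $|\psi^{-1}(t_{\alpha,\beta})|\ge k$, so $(Oo,g;)$ has as many ordinary fibers over $t_{\alpha,\beta}$ as we wish, enough to serve as the branching of $\varphi_0$ and still leave some over. (If every $\alpha_i=1$, in particular if $t=0$, take $\varphi=\psi$ and choose the fibers of the ordinary ratios among $\psi^{-1}(t_{\alpha,\beta})$; so assume some $\alpha_i>1$.)

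The heart of the matter is the construction of $\varphi_0$, via Lemma~\ref{lemma26}. Put $n=[\alpha_1,\dots,\alpha_t]$ (so $n\ge2$ and each $\alpha_i\mid n$) and $r_i=\beta_i\, n/\alpha_i\in\mathbb Z$; then $\sum_i r_i=n\sum_i\beta_i/\alpha_i=0$, which is exactly the hypothesis that the domain has Euler number $0$. Apply Lemma~\ref{lemma43} with $k\ge\max(2g+2,\,t+1)$, pick $t$ of the fibers in $\psi^{-1}(t_{\alpha,\beta})$ and call them $e_1,\dots,e_t$, take a frame $(F\times S^1,\{m_1,\dots,m_t\})$ for $(Oo,g;0/1,\dots,0/1)$ in which the $i$-th $0/1$-fiber is $e_i$, and let $\omega\colon\pi_1(F\times S^1)\to S_n$ be the representation killing the generators of $F$ and sending $h\mapsto\varepsilon$ and $q_i\mapsto\varepsilon^{r_i}$. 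It is well defined because $\sum r_i\equiv0\pmod n$, and transitive because its image is $\langle\varepsilon\rangle$. Since $\omega(h)=\varepsilon$ is an $n$-cycle, Lemma~\ref{lemma26} applies with $s=s^*=1$: the completion $\varphi_0$ of the associated covering has, over the $i$-th $0/1$-fiber, the single fiber with invariants $A_i=n\alpha_i/d_i$, $B_i=r_i/d_i$, where $d_i=(n,r_i)$; and since $(\alpha_i,\beta_i)=1$ one checks $(n,r_i)=n/\alpha_i$, so $A_i=\alpha_i$ and $B_i=\beta_i$. Therefore $\varphi_0\colon(Oo,g;\beta_1/\alpha_1,\dots,\beta_t/\alpha_t)\to(Oo,g;)$ — the orbit genus stays $g$ because $\omega$ sends the fiber to an $n$-cycle, so the induced map of orbit surfaces has degree one — it is branched exactly along those $e_i$ with $\alpha_i>1$, each $\varphi_0^{-1}(e_i)$ being the single fiber of ratio $\beta_i/\alpha_i$ (because $\varphi_0^{-1}(q_i\times S^1)$ is connected), while the preimage of any other ordinary fiber of $(Oo,g;)$ is a union of ordinary fibers.

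Finally put $\varphi=\psi\circ\varphi_0$. Its branch locus is $t_{\alpha,\beta}$, because the branching of $\varphi_0$ consists of fibers already lying over $t_{\alpha,\beta}$ and $\psi$ is branched along $t_{\alpha,\beta}$; so $\varphi$ is a branched covering of $(S^2\times S^1,t_{\alpha,\beta})$, and
$$\varphi^{-1}(t_{\alpha,\beta})=\bigl(\varphi_0^{-1}(e_1)\cup\dots\cup\varphi_0^{-1}(e_t)\bigr)\cup\varphi_0^{-1}\bigl(\psi^{-1}(t_{\alpha,\beta})\setminus(e_1\cup\dots\cup e_t)\bigr),$$
in which the first union is exactly the fibers of the ratios of $(Oo,g;\beta_1/\alpha_1,\dots,\beta_t/\alpha_t)$ and the second is a nonempty union of $0/1$-fibers, as required. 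I do not expect a genuine obstacle: the argument reduces to the elementary identity $(n,\beta_i n/\alpha_i)=n/\alpha_i$, to $\sum r_i=0$ (a literal restatement of $e=0$), and to Lemma~\ref{lemma26}; the one indispensable input from earlier is that $|\psi^{-1}(t_{\alpha,\beta})|$ can be taken larger than $t$ — the content of Lemmas~\ref{lemma41}--\ref{lemma43}.
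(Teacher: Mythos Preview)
Your argument is correct and in fact more direct than the paper's. The paper proves Theorem~\ref{thm45} by induction on $t$: the base case $t=2$ is Lemma~\ref{lemma44}, and in the inductive step one strips off the last ratio $u/v$ by a $v$-fold cyclic cover with $\omega(h)=\varepsilon$ and all $\omega(q_i)=(1)$ (so $r_i=0$ in Lemma~\ref{lemma26}), reducing to a symbol with one fewer ratio. You instead build $\varphi_0$ in a single application of Lemma~\ref{lemma26}, taking $n=[\alpha_1,\dots,\alpha_t]$ and $r_i=\beta_i n/\alpha_i$; the hypothesis $\sum r_i=0$ is exactly $e(M)=0$, and the identity $(n,\beta_i n/\alpha_i)=n/\alpha_i$ (from $(\alpha_i,\beta_i)=1$) gives $A_i=\alpha_i$, $B_i=\beta_i$ on the nose. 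This bypasses Lemma~\ref{lemma44} and the induction entirely. What the paper's approach buys is that each individual covering has small degree (the denominators $v$ appear one at a time), whereas your single cover has degree $\operatorname{lcm}(\alpha_i)$; what your approach buys is a cleaner one-shot construction in which the preimage of each chosen fiber $e_i$ is visibly the single fiber of ratio $\beta_i/\alpha_i$, making the statement about $\varphi^{-1}(t_{\alpha,\beta})$ immediate.
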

\begin{proof}
Induction on $t$. Assume that $t=2$. If $\alpha_1=1$, by Lemma~\ref{lemma43}, 
$(Oo,g;\beta_1/1,-\beta_1/1)$ is a branched covering of $(S^2\times S^1,t_{\alpha,\beta})$; otherwise, by Lemma~\ref{lemma44}, $(Oo,g;\beta_1/\alpha_1,-\beta_1/\alpha_1)$ is a branched covering of $(S^2\times S^1,t_{\alpha,\beta})$ as needed. 

Assume 
that the theorem is true for $t\geq2$, that is, if the sum of the ratios
$\sum_{i=1}^t\beta_i/\alpha_i=0$, then
the manifold $(Oo,g;\beta_1/\alpha_1,\dots,\beta_t/\alpha_t)$ is a branched
covering of $(S^2\times S^1,t_{\alpha,\beta})$ with a branched
covering $\varphi$ such that~$\varphi^{-1}(t_{\alpha,\beta})$ contains the fibers of the ratios 
of the manifold~$(Oo,g;\beta_1/\alpha_1,\dots,\beta_t/\alpha_t)$, plus some extra
$0/1$ fibers. 
Assume now that
$\sum_{i=1}^t\beta_i/\alpha_i+u/v=0$. 

Let $(F\times
S^1,\{m_0,m_1,\dots,m_t\})$ be a frame for the manifold 
\newline
$(Oo,g;u/1,b_1/a_1,\dots,b_t/a_t)$ 
where the $b_i/a_i$ is the reduced ratio $v\beta_i/\alpha_i$;
that is, $b_i=v\beta_i/(v,\alpha_i)$, and
$a_i=\alpha_i/(v,\alpha_i)$ for $i=1,\dots,t$. Let
$\omega:\pi_1(F\times S^1)\rightarrow S_v$ be the representation
such that $\omega(h)=(1,2,\dots,v)$, and $\omega(q_i)=(1)$. Here
we have $s=1$ and $r_i=0$. Then, by Lemma~\ref{lemma26}, the completion of the
covering associated with $\omega$ is
$$\varphi:(Oo,g,\frac BA,\frac{B_1}{A_1},\dots,\frac{B_t}{A_t})\rightarrow
(Oo,g;\frac\beta1,\frac{b_1}{a_1},\dots,\frac{b_t}{a_t})$$
where $d=(v,u)=1$, and $B=u/d=u$, and
$A=v/d=v$, and for $i\in\{1,\dots,t\}$,
$d_i= (v,b_i)= (v,v\beta_i/(v,\alpha_i))=
v/(v,\alpha_i)$, and
$B_i= b_i/d_i= (v\beta_i/(v,\alpha_i))/(v/(v,\alpha_i))
=\beta_i$, 
and $A_i=v a_i/d_i=(v
\alpha_i/(v,\alpha_i))/(v/(v,\alpha_i))=\alpha_i$. That
is, $(Oo,g;u/v,\beta_1/\alpha_1,\dots,\beta_t/\alpha_t)$ is a
branched covering of $(Oo,g;u/1,b_1/a_1,\dots,b_t/a_t)$ with
branching along the fibers of the ratios. 

Since
$u/1+\sum_{i=1}^tb_i/a_i=v(u/v+\sum_{i=1}^t\beta_i/\alpha_i)=0$,
\newline
and $(Oo,g;u/1,b_1/a_1,\dots,b_t/a_t)$ is isomorphic 
\newline
to
$(Oo,g;(u a_1+b_1)/a_1,b_2/a_2,\dots,b_t/a_t)$,
by induction, 
the manifold
$(Oo,g;u/1,b_1/a_1,\dots,b_t/a_t)$ is a branched
covering of $(S^2\times S^1,t_{\alpha,\beta})$ with a branched
covering $\psi$ such that~$\psi^{-1}(t_{\alpha,\beta})$ contains the fibers of the ratios 
of $(Oo,g;u/1,b_1/a_1,\dots,b_t/a_t)$ plus some extra 0/1 fibers. 
We conclude that 
$(Oo,g;u/v,\beta_1/\alpha_1,\dots,\beta_t/\alpha_t)$ is also
a branched covering of $(S^2\times S^1,t_{\alpha,\beta})$ as required.

\end{proof}

\begin{corollary}
\label{coro46}
Let $M$ be an orientable Seifert manifold with orientable basis and $e(M)=0$, and assume that $\alpha\geq3$. Then $M$ is
a branched covering of $(S^2\times S^1,t_{\alpha,\beta})$.
\end{corollary}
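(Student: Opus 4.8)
The plan is to reduce the statement to Theorem~\ref{thm45} by normalizing an arbitrary Seifert symbol with Euler number zero. Let $M=(Oo,g;\beta_1/\alpha_1,\dots,\beta_t/\alpha_t)$ be an orientable Seifert manifold with orientable base and $e(M)=-\sum\beta_i/\alpha_i=0$. If $t=0$, then $M=(Oo,g;)$, and Lemma~\ref{lemma43} (applied with $k=2g+2$, which is legitimate since $\alpha\geq3$) already exhibits $M$ as a branched covering of $(S^2\times S^1,t_{\alpha,\beta})$. So we may assume $t\geq1$.

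First I would handle the case $t=1$: here $e(M)=0$ forces $\beta_1/\alpha_1=0$, so $\beta_1=0$ and $\alpha_1=1$, and by Theorem~\ref{tma21}, move~1, the ratio $0/1$ can be deleted, reducing to $M=(Oo,g;)$, which is the $t=0$ case. For $t\geq2$, the point is that $\sum_{i=1}^t\beta_i/\alpha_i=0$ is exactly the hypothesis of Theorem~\ref{thm45} (reading the vanishing sum as $\sum_{i=1}^{t-1}\beta_i/\alpha_i + \beta_t/\alpha_t=0$), so Theorem~\ref{thm45} directly produces a branched covering $\varphi:M\rightarrow(S^2\times S^1,t_{\alpha,\beta})$. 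Hence in every case $M$ is such a branched covering, which is the claim.

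The only genuine subtlety is making sure the reduction lands in a symbol to which Theorem~\ref{thm45} literally applies, since that theorem is stated for symbols with $t\geq2$ ratios summing to zero, and an arbitrary zero-Euler-number symbol might a priori have $t=0$ or $t=1$. Both exceptional cases are dispatched above using the classification moves of Theorem~\ref{tma21} (delete a $0/1$ ratio) together with Lemma~\ref{lemma43}; no new geometric input is needed. I expect the main (and essentially only) obstacle to be purely bookkeeping: confirming that Theorem~\ref{thm45}'s conclusion ``$\varphi^{-1}(t_{\alpha,\beta})$ contains the fibers of the ratios plus some extra $0/1$ fibers'' is compatible with whatever normalization one performs, but since the corollary only asserts the existence of \emph{some} branched covering and discards that extra structure, this compatibility is immediate.
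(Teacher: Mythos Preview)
Your proposal is correct and follows the paper's intended route: Corollary~\ref{coro46} is stated immediately after Theorem~\ref{thm45} with no separate proof, precisely because an arbitrary symbol with $e(M)=0$ falls under Theorem~\ref{thm45} after trivial normalization. Your case split $t=0,1,\geq2$ is sound; an even quicker way to handle $t<2$ is to use move~1 of Theorem~\ref{tma21} to \emph{add} $0/1$ ratios until $t\geq2$, landing directly in the hypotheses of Theorem~\ref{thm45}.
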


\subsection{A non-trivial non-universal torus knot in $S^2\times
  S^1$.}
\begin{theorem}
\label{thm47}
If $M$ is a branched covering of $(S^2\times S^1,t_{2,1})$, then
$M\cong S^2\times S^1$.
\end{theorem}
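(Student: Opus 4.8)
The plan is to understand the torus knot $t_{2,1}$ very concretely and then show that the only branched coverings it admits are the trivial ones. First I would identify $t_{2,1}$ precisely: it is an ordinary fiber of $(Oo,0;1/2,-1/2)$, which is indeed $S^2\times S^1$ by the classification of fiberings (Theorem in Section~\ref{sec223}). The key structural fact to extract is the fundamental group of the complement $M_0=S^2\times S^1-t_{2,1}$, using the presentation displayed in Section~\ref{sec3} with $\alpha=2$, $\beta=1$:
$$\pi_1(M_0)=\langle q_0,q_1,q_2,h:q_1^2h=1,\ q_2^2h^{-1}=1,\ q_0q_1q_2=1,\ [q_i,h]=1\rangle.$$
Eliminating $h=q_1^{-2}=q_2^2$ and $q_0=(q_1q_2)^{-1}$, one sees that $\pi_1(M_0)$ is generated by $q_1,q_2$ with the single relation $q_1^{-2}=q_2^{2}$ together with $[q_1,q_2^2]=1$; in fact $[q_1,h]=1$ already forces $q_1$ to commute with $q_1^{-2}$ (automatic) and $q_2$ to commute with $q_1^{-2}$. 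So the group is essentially $\langle q_1,q_2 \mid q_1^2q_2^2=1,\ [q_1,q_2^2]=1\rangle$. The hoped-for outcome of this computation is that $\pi_1(M_0)$ is \emph{abelian} — indeed I expect $\pi_1(M_0)\cong\mathbb Z\oplus\mathbb Z_2$, matching $H_1(M_0)\cong\mathbb Z\oplus\mathbb Z_2$ recorded in the excerpt. Verifying commutativity of $q_1$ and $q_2$ from the relations above is the one genuinely delicate point, and I expect it to be the main obstacle: one must show that $q_1 q_2 = q_2 q_1$ follows, e.g.\ by checking that $q_2$ commutes with $q_1$ given that it commutes with $q_1^2$ and that $q_1^2=q_2^{-2}$ is central.

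Granting that $\pi_1(M_0)$ is abelian, any representation $\omega:\pi_1(M_0)\to S_n$ associated to an $n$-fold covering $M\to(S^2\times S^1,t_{2,1})$ has abelian image, so after passing to a connected component we may assume $\omega$ is transitive with abelian image. But then the classification Theorem of Section~\ref{sec3} (the Abelian-coverings theorem) applies directly with $\alpha=2$, $\beta=1$. I would simply run through its two cases. In the case $\omega(h)$ an $n$-cycle, the theorem already gives $\tilde M=(Oo,0;k/2,-k/2)\cong S^2\times S^1$ and $\varphi$ unbranched. In the case $\omega(h)=(1)$: since $a_i\mid\alpha=2$, the only possibilities are $a_1,a_2\in\{1,2\}$. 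If some $a_i=1$ (Cases 1.1, 1.2 of the excerpt) then $\alpha'=\alpha/a_{3-i}$ is $2$ or $1$, and $\tilde M=(Oo,0;1/\alpha',-1/\alpha')$ — a Seifert fibering of $S^2$ with two exceptional fibers and $e=0$, hence $S^2\times S^1$. (The extra summands $\bigoplus_{j=1}^{a_i-2}\mathbb Z_\alpha$ in the stated $H_1$ are empty since $a_i\le 2$.) If $a_1=a_2=2$ (Case 1.3), then $\delta\mid(2,2)=2$, so $\delta\in\{1,2\}$; if $\delta=2$ the theorem says $\varphi$ is an unbranched cyclic cover of $S^2\times S^1$, and if $\delta=1$ the Remark in the excerpt gives, up to conjugation, $\sigma_1=(1,2)(3,4)$, $\sigma_2=(1,3)(2,4)$ and $\tilde M=(Oo,0;1/1,1/1,-1/1,-1/1)$, which by move~2 of Theorem~\ref{tma21} (and deleting $0/1$'s) is $(Oo,0;)=S^2\times S^1$.

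Putting the cases together, in every instance $\tilde M\cong S^2\times S^1$, which is exactly the assertion of Theorem~\ref{thm47}. A small bookkeeping point I would address explicitly: a general (not necessarily connected) branched covering $M\to(S^2\times S^1,t_{2,1})$ is a disjoint union of connected ones, each classified as above, so $M$ is a disjoint union of copies of $S^2\times S^1$; if one wants $M$ connected, it is a single $S^2\times S^1$. The only real work is the group-theoretic lemma that $\pi_1(M_0)$ is abelian; once that is in hand the theorem is an immediate corollary of the abelian-coverings classification already proved. As a sanity check, this is consistent with the non-universality claim advertised in the introduction: $t_{2,1}$ cannot cover any manifold with $\pi_1$ non-abelian (e.g.\ most hyperbolic or Seifert manifolds), so it is certainly not universal.
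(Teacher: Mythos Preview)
Your approach has a genuine gap precisely where you flagged it as delicate: the group $\pi_1(M_0)$ is \emph{not} abelian. After the simplifications you correctly carry out, the presentation reduces to
\[
\pi_1(M_0)\;\cong\;\langle q_1,q_2\mid q_1^2q_2^2=1\rangle,
\]
and none of the commutator relations $[q_i,h]=1$ impose anything further (they are all consequences of $h=q_1^{-2}=q_2^2$). But this is the fundamental group of the Klein bottle: setting $a=q_1q_2$, $b=q_2$ turns the single relation into $b^{-1}ab=a^{-1}$, which is nonabelian (indeed $M_0$ is the orientable twisted $I$--bundle over the Klein bottle). In particular there are transitive representations with nonabelian image; for instance $\omega(q_1)=(1\,2)$, $\omega(q_2)=(2\,3)$ in $S_3$ satisfies $\omega(q_1)^2\omega(q_2)^2=(1)$ and has image $S_3$. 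Hence the Abelian--coverings theorem of Section~\ref{sec3} does not cover all branched covers of $(S^2\times S^1,t_{2,1})$, and your case analysis is incomplete. (One can check directly that the $S_3$ cover above still yields $S^2\times S^1$, so the theorem is not contradicted---only your method.)

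The paper's proof takes a completely different, geometric route that sidesteps any classification of representations. The key observation is that the sphere $F=S^2\times\{1\}$ meets $t_{2,1}$ in exactly two points, so for any branched cover $\varphi:M\to(S^2\times S^1,t_{2,1})$ each component of $\varphi^{-1}(F)$ is a branched cover of $S^2$ over two points, hence again a $2$--sphere. A homology argument ($\varphi_*[\varphi^{-1}(F)]=n[F]\neq0$) shows some component is nonseparating; since $M$ is an orientable Seifert manifold containing a nonseparating $2$--sphere, $M\cong S^2\times S^1$. If you want to rescue your algebraic approach you would need to analyze all (dihedral) representations of the Klein--bottle group, not just the abelian ones; the geometric argument avoids this entirely.
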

\begin{proof}
Let $\varphi:M\rightarrow(S^2\times S^1,t_{2,1})$ be an $n$-fold branched
covering, and write $F=S^2\times\{1\}$. Then $F$ is a non-separating
2-sphere generating $H_2(S^2\times S^1)$, and $|F\cap t_{2,1}|=2$; thus $\varphi^{-1}(F)$ is a disjoint
union of 2-spheres. 
Now an orientation on~$F$ induces an orientation on~$\varphi^{-1}(F)$
through $\varphi$, and with this orientation, $\varphi^{-1}(F)$ defines a class in $H_2(M)$. since $\varphi|:\varphi^{-1}(F)\rightarrow F$ is an
$n$-fold branched covering, then 
$\varphi_*[\varphi^{-1}(F)]=n[F]\neq0$.
In particular $[\varphi^{-1}(F)]\in H_2(M)-\{0\}$,
%
and, therefore, some component of $\varphi^{-1}(F)$ is
non-separating. Then~$M$ is an orientable Seifert manifold with a
non-separating 2-sphere. We conclude that $M\cong S^2\times S^1$.
\end{proof}

\end{document}